\theoremstyle{plain}
\newtheorem{theorem}{Theorem}[section]
\newtheorem{lemma}{Lemma}[section]
\newtheorem{proposition}{Proposition}[section] 
\newtheorem{corollary}{Corollary}[section]
\theoremstyle{definition}
\newtheorem{definition}{Definition}[section]
\newtheorem{example}{Example}[section]
\newtheorem{remark}{Remark}[section]
\begin{document}

\title{Invariant polynomials, gaps, and sparseness}

\author{John P. D'Angelo}

\address{Dept. of Mathematics, Univ. of Illinois, 1409 W. Green St., Urbana IL 61801}

\email{jpda@illinois.edu}

\author{Dusty E. Grundmeier}

\address{Dept. of Mathematics, The Ohio State University, 231 West 18th Avenue,
Columbus, Ohio 43210 }

\email{grundmeier.1@osu.edu}

\author{Daniel A. Lichtblau}

\address{Wolfram Research, Champaign IL 61802}

\email{danl@wolfram.com}

\maketitle

\begin{abstract} We consider each of the three classes of representations of cyclic groups that arise in the study of rational sphere maps.
We study the possible number of terms for invariant polynomials with non-negative coefficients that are constant on the appropriate line or hyperplane.
Our result provides crucial information about gaps in the possible target dimensions for certain invariant polynomial sphere maps.
We interpret our results in terms of sparseness for solutions of certain linear systems.

\medskip

\noindent {\bf AMS Classification Numbers}:  32H35, 32M99, 32V99, 11B13, 11B05.

\medskip

\noindent {\bf Key words}: Rational sphere maps, Hermitian invariant groups, gaps, sparse linear systems, Frobenius number,  degree estimates.

\end{abstract}

\section{Introduction}

Let $\Gamma$ denote a fixed-point free finite subgroup of the unitary group ${\bf U}(n)$. Two of the authors proved (see \cite{L} and \cite{DL}) that the existence of a non-constant rational 
sphere map invariant under $\Gamma$ restricts $\Gamma$ considerably. The group must be cyclic and represented in one of only three ways. 
Each of these three situations leads to questions about the possible minimal target dimensions for such maps. These questions lead us to study those invariant polynomials
with non-negative coefficients that are constant on a line or plane and ask how many terms such polynomials can have.
In our situation the set of possible values (of the number of terms) satisfies a Frobenius or postage stamp property; see Proposition 2.3. 
Not every value is possible. Thus there are {\bf gaps} in the target dimensions for (so-called minimal) invariant maps. 
Gaps in the possible target dimensions for rational sphere maps also exist. See \cite{HJX} and \cite{HJY} for various results about such gaps.  Because of invariance, the combinatorial results
in this paper differ considerably. The invariance also has consequences for sparseness of linear systems.

The three possible groups are described as follows. 

\begin{example} Let $\eta$ be a primitive $m$-th root of unity. Let $\Gamma(m,1)$ be the cyclic group of order $m$ generated by  $\eta  I$, where $I$ denotes the identity map on complex Euclidean space ${\mathbb C}^n$
for some $n$. 
 \end{example}
 
 \begin{example} Let $\eta$ be a primitive $p$-th root of unity where $p$ is odd.   Let
$ \Gamma(p,2)$ be the cyclic group of order $p$ generated by $\eta I_1 \oplus \eta^2 I_2$, where $I_1$ and $I_2$ denote  identity matrices on complex Euclidean spaces of arbitrary dimension. \end{example} 

\begin{example} Let $\eta$ be a primitive  $7$-th root of unity. Let $\Gamma$ be the cyclic group of order $7$ generated by 
$\eta I_1 \oplus \eta^2 I_2 \oplus \eta^4 I_3$. Here each $I_k$ denotes the identity matrix in an arbitrary dimension (not necessarily equal).\end{example} 

The most interesting case is perhaps Example 1.2, but Example 1.3 involves a new phenomenon we discuss in Section 5. 
In each case we will assume that the source dimension is minimal. Hence in 
Example 1.1 the source dimension will equal one, and this situation is quite simple.
In Example 1.2 the minimum source dimension is $2$, and in Example 1.3 the minimum source dimension is $3$. 
If we allow the identity matrices in the above examples to be in larger dimensions, the combinatorics becomes more difficult.
The natural source dimensions in the three cases are $1,2,3$ respectively.

{\bf Convention}. We henceforth use the following notation: $\Gamma(m,1)$ denotes the group in Example 1.1 where the source dimension equals $1$ and $\Gamma(2r+1,2)$ denotes the group in Example 1.2
where the source dimension equals $2$. Finally $\Gamma(7)$ denotes the group in Example 1.3 where the source dimension equals $3$. We discuss the group from Example 1.1 in source dimension $2$ in Section 4,
where we explain and use the notation $\Gamma_2(m,1)$. We discuss $\Gamma(7)$ in Section 5.

\begin{remark} The groups $\Gamma(p,q)$, defined as in Example 1.2 in source dimension $2$, but with the exponent $2$ replaced by an exponent $q$ relatively prime to $p$,  fit to some extent in this story.
They arise for sphere maps only when $q$ can be chosen to equal $1$ or $2$. The condition (required for sphere mappings) that the coefficients be non-negative will thus be a crucial assumption. \end{remark}

For each of the above groups we are therefore interested in real polynomials satisfying the following three properties:

\begin{definition} Let $\Gamma$ denote any of the groups from Examples 1.1, 1.2, and 1.3. A real polynomial is $\Gamma$-{\bf special} if 
\begin{itemize}
\item $f$ is $\Gamma$-invariant. In the first case $f(\eta x) = f(x)$ in source dimension $1$ and $f(\eta x,\eta y) = f(x,y)$ in source dimension $2$. In the second case $f(\eta x, \eta^2y) = f(x,y)$. In the third case,
$f(\eta x, \eta^2 y, \eta^4 z) = f(x,y,z)$. In each case $\eta$ is a root of unity as in Examples 1.1,1.2, and 1.3.
\item In the first case, $f(1)=1$. In the second case, $f(x,y)=1$ on the line $x+y=1$. In the third case, $f(x,y,z)= 1$ on the plane given by $x+y+z=1$.
\item The coefficients of $f$ are non-negative, and its constant term is $0$.
\end{itemize}
We say that $f$ is a {\bf good} polynomial if it satisfies the second and third items. Throughout this paper, the phrase ``has  non-negative coefficients" means ``has {\bf only} non-negative coefficients."
\end{definition} 

When the group is understood we just say {\it special}. Our main result determines  precise information on the gaps in the possible number of terms in special polynomials. See Theorem 1.2.


The following general uniqueness result holds for polynomials invariant under the group $\Gamma(p,q)$, 
but (unless $q=1$ or $q=2$) the coefficients must be of both signs. It is a special case of a result from 1992 appearing in \cite{DL}.

\begin{theorem} Put $\Gamma = \Gamma(p,q)$. There is a unique polynomial  $\Phi(x,y)=\Phi_\Gamma(x,y)$ satisfying the following properties:
\begin{itemize}
\item $\Phi$ is $\Gamma$-invariant. Thus $\Phi(\eta x, \eta^q y) = \Phi(x,y)$. 
\item $\Phi(x,y)=1$ on the line given by $x+y=1$.
\item $\Phi(0,0) = 0$.
\item $\Phi$ is of degree $p$.
\end{itemize}
Furthermore $\Phi$ is given by the following formula:
\begin{equation}  \label{eq:prod}
\Phi(x,y) = 1 - \prod_{j=1}^p (1 - \eta^j x - \eta^{qj} y). 
 \end{equation}
\end{theorem}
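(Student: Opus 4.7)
The plan is to verify the four listed properties of the explicit formula directly, and then establish uniqueness by observing that $\Gamma$-invariance upgrades the single affine condition ``$f = 1$ on $\{x+y=1\}$'' to ``$f = 1$ on each of $p$ distinct lines'', after which a degree count closes the argument. Write $L_j(x,y) = 1 - \eta^j x - \eta^{qj} y$ for $j = 1, \dots, p$ and set $P = \prod_{j=1}^{p} L_j$, so the candidate is $\Phi = 1 - P$.

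Existence is a checklist. Invariance holds because the substitution $(x,y) \mapsto (\eta x,\eta^q y)$ sends $L_j$ to $L_{j+1}$ and hence only permutes the factors of $P$. The line condition holds because the $j=p$ factor is $L_p = 1 - x - y$ (using $\eta^p = 1$), which vanishes on $x+y=1$; so $P \equiv 0$ there and $\Phi \equiv 1$. Evaluation at $(0,0)$ gives $\Phi(0,0) = 1 - 1 = 0$. Finally, the coefficient of $x^p$ in $P$ is $(-1)^p \prod_{j=1}^p \eta^j$, which is nonzero, so $\deg \Phi = p$.

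For uniqueness, let $f$ satisfy all four bullets. If $L_j(x_0,y_0) = 0$, then $(\eta^j x_0, \eta^{qj} y_0)$ lies on $x+y=1$, so $f(\eta^j x_0, \eta^{qj} y_0) = 1$; by $\Gamma$-invariance, $f(x_0,y_0)$ equals this value, hence equals $1$. Thus $1-f$ vanishes on the zero set of each $L_j$. Since $\eta$ is primitive of order $p$, the coefficients $-\eta^j$ of $x$ in $L_j$ take $p$ distinct values, so $L_1, \dots, L_p$ are pairwise non-proportional degree-one polynomials, hence pairwise coprime irreducibles in $\mathbb{C}[x,y]$. Therefore $P$ divides $1-f$. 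Because $\deg(1-f) \le p = \deg P$, one must have $1 - f = c\, P$ for some constant $c$; plugging in $(0,0)$ forces $c = 1$, and so $f = \Phi$.

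The only delicate point is the passage from ``each irreducible $L_j$ divides $1-f$'' to ``the product $P$ divides $1-f$''. This requires the $L_j$ to be pairwise distinct up to scalar, but that is immediate from the primitivity of $\eta$; once it is in hand, unique factorization in $\mathbb{C}[x,y]$ together with the degree constraint pins down both the factorization and the multiplicative constant, so no further input (group-averaging, character theory, or direct coefficient comparison) is needed.
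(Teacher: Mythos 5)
Your proof is correct and complete: the existence checklist is routine, and the uniqueness argument (invariance forces $1-f$ to vanish on all $p$ lines $L_j=0$, the $L_j$ are pairwise non-proportional since they share constant term $1$ but have distinct $x$-coefficients $-\eta^j$, so $\prod_j L_j$ divides $1-f$, and the degree bound plus evaluation at the origin fix the constant). The paper does not prove this theorem at all — it cites the 1992 result of \cite{DL} — and your argument is essentially the standard one from that reference, so nothing further is needed.
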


\begin{remark} The constant polynomial $1$ satisfies the first two properties, but it is not important in our discussion. It is precluded by our assumption that $f(0,0)=0$.  This assumption
is required for uniqueness results. \end{remark} 

\begin{remark} By \cite{DL}, a general uniqueness result holds for polynomials invariant under any fixed-point free subgroup $\Gamma$ of ${\bf U}(n)$, and a formula analogous to \eqref{eq:prod} defines $\Phi_\Gamma$.
Except in the three cases noted, however, the coefficients of the corresponding real polynomials must be of both signs. Allowing coefficients of both signs arises when we map spheres to hyperquadrics,
but will not be a major concern in this paper. For general $q$, combinatorial formulas for the integers arising in \eqref{eq:prod}  appear in \cite{LWW}. See also \cite{G}, \cite{GLW}, and \cite{G2} for additional information in this case. \end{remark}

\begin{remark} For the groups $\Gamma(m,1)$ and $\Gamma(2r+1,2)$, the polynomial $\Phi_\Gamma$ includes all the non-constant invariant terms that generate the algebra of invariant polynomials.
For the group $\Gamma(7)$, the polynomial $\Phi_\Gamma$ includes all of these terms and also several of their products. In general, for $\Gamma(p,q)$,  not all the monomials generating the algebra appear
in $\Phi_\Gamma$.  See \cite{G2}.
\end{remark} 

We call $\Phi_\Gamma$ the {\bf basic} polynomial for $\Gamma$. When $\Gamma= \Gamma(m,1)$, the basic polynomial is given by $x^m$ in source dimension $1$ and by $(x+y)^m$ in source dimension $2$.
Applying Theorem 1.1 to $\Gamma(2r+1,2)$ yields Corollary 1.1 below. Reference \cite{Drat} includes an entire chapter on these polynomials, 
including proofs of the various statements made in the subsequent two paragraphs. See also \cite{D2} and \cite{D3} for further results. The basic polynomial for $\Gamma(7)$ appears in Proposition 1.1.

We recall some information about the group $\Gamma(2r+1,2)$ in the following corollary of Theorem 1.1.

\begin{corollary} For each $r$ there is a unique polynomial $f(x,y)=f_{2r+1}(x,y)$ satisfying the following properties:
\begin{itemize}
\item $f$ is $\Gamma(2r+1,2)$-invariant. Thus $f(\eta x, \eta^2y) = f(x,y)$.
\item $f(x,y)=1$ on the line $x+y=1$.
\item The coefficients of $f$ are non-negative and $f(0,0)=0$. 
\item $f$ is of degree $2r+1$.
\end{itemize}
The polynomial $f_{2r+1}(x,y)$ has the following explicit formulas:
\begin{subequations}
\renewcommand{\theequation}{\theparentequation.\arabic{equation}}
\begin{align}
f_{2r+1}(x,y) &= \left( {x + \sqrt{x^2 + 4y} \over 2}\right) ^{2r+1} + \left( {x - \sqrt{x^2 + 4y} \over 2}\right) ^{2r+1} + y^{2r+1} \label{eq:21}\\
f_{2r+1}(x,y) &= x^{2r+1} + y^{2r+1} + \sum_{j=1}^r c(r,j) x^{2r+1-2j} y^j. \label{eq:22}\\
\intertext{The coefficients $c(r,j)$ are positive integers that  satisfy}
&c(r,j) = {2r+1 \over j} {2r-j \choose j-1} . \label{eq:22}
\end{align}
\end{subequations}
\end{corollary}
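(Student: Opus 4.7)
The plan is to obtain uniqueness as a direct consequence of Theorem 1.1 applied with $p = 2r+1$ and $q = 2$: the first three bullets in the corollary are exactly those in Theorem 1.1, and the degree condition is the fourth. So I only need to show that the right-hand side of \eqref{eq:21} defines a polynomial with all listed properties, and then that it admits the expansion \eqref{eq:22} with the claimed coefficients.

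First I would introduce $\alpha, \beta = \frac{x \pm \sqrt{x^2+4y}}{2}$, the two roots of $t^2 - xt - y = 0$, so that $\alpha + \beta = x$ and $\alpha\beta = -y$. The sum $L_n := \alpha^n + \beta^n$ is symmetric in $\alpha, \beta$, hence lies in $\mathbb{Z}[x,y]$; this also follows directly from the recurrence $L_n = xL_{n-1} + yL_{n-2}$ with $L_0 = 2$, $L_1 = x$. Thus the right-hand side of \eqref{eq:21} is a polynomial of degree $2r+1$. For invariance under $(x,y) \mapsto (\eta x, \eta^2 y)$, this substitution sends $x^2 + 4y$ to $\eta^2(x^2+4y)$, so a consistent branch of the square root is scaled by $\pm \eta$; either sign sends the unordered pair $\{\alpha,\beta\}$ to $\{\eta\alpha, \eta\beta\}$, and therefore $L_{2r+1}$ is multiplied by $\eta^{2r+1} = 1$. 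The term $y^{2r+1}$ is fixed because $\eta^{2(2r+1)} = 1$. To check that $f_{2r+1} = 1$ on the line $x+y=1$, substitute $y = 1-x$: then $x^2+4y = (x-2)^2$, so one can take $\alpha = 1$ and $\beta = x - 1 = -y$, giving $L_{2r+1} = 1 + (-y)^{2r+1} = 1 - y^{2r+1}$, which cancels the added $y^{2r+1}$.

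For \eqref{eq:22} I would use the generating function
\begin{equation*}
\sum_{n \ge 0} L_n(x,y) \, t^n \;=\; \frac{1}{1-\alpha t} + \frac{1}{1-\beta t} \;=\; \frac{2 - xt}{1 - xt - yt^2}.
\end{equation*}
Expanding $(1-xt-yt^2)^{-1} = \sum_{k\ge 0}(xt+yt^2)^k$, collecting powers of $t$, and subtracting the $xt$ contribution gives
\begin{equation*}
L_n(x,y) \;=\; \sum_{j=0}^{\lfloor n/2\rfloor}\!\Bigl(2\tbinom{n-j}{j} - \tbinom{n-j-1}{j}\Bigr) x^{n-2j}y^j.
\end{equation*}
A short algebraic identity shows $2\binom{n-j}{j} - \binom{n-j-1}{j} = \frac{n}{n-j}\binom{n-j}{j} = \frac{n}{j}\binom{n-j-1}{j-1}$. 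Setting $n = 2r+1$, separating the $j=0$ term (which gives $x^{2r+1}$) and adding $y^{2r+1}$ produces \eqref{eq:22} with the coefficient in the form \eqref{eq:22} as stated. Positivity and integrality are then immediate.

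The only step with any substance is the derivation of the Lucas-type closed form for $L_{2r+1}$; the generating-function route above handles it cleanly, and the identification of the two algebraic expressions for $c(r,j)$ is a one-line binomial manipulation. Everything else — invariance, the value on $x+y=1$, the degree bound, and integer non-negative coefficients — is a direct computation from the defining expression in \eqref{eq:21}.
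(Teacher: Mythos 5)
Your proposal is correct. The paper itself offers no proof of this corollary: it simply states that applying Theorem 1.1 to $\Gamma(2r+1,2)$ yields it, and defers the verification of the explicit formulas \eqref{eq:21}--\eqref{eq:22} to the reference \cite{Drat}. Your uniqueness step is exactly the paper's implicit route (any $f$ satisfying the four bullets satisfies the hypotheses of Theorem 1.1 with $p=2r+1$, $q=2$, hence equals $\Phi_\Gamma$), and you then supply the missing existence half: the Lucas-type sums $L_n=\alpha^n+\beta^n$ with $\alpha\beta=-y$, $\alpha+\beta=x$ give a polynomial of degree $2r+1$, the weighted scaling $\{\alpha,\beta\}\mapsto\{\eta\alpha,\eta\beta\}$ gives invariance, the factorization $x^2+4y=(x-2)^2$ on $x+y=1$ gives the normalization, and the generating function $\frac{2-xt}{1-xt-yt^2}$ together with the binomial identity $2\binom{n-j}{j}-\binom{n-j-1}{j}=\frac{n}{j}\binom{n-j-1}{j-1}$ yields \eqref{eq:22} with positive integer coefficients. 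All of these computations check out (including the boundary term $j=r$, where $c(r,r)=2r+1$), so your argument is a complete, self-contained proof of what the paper treats as a citation.
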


This family of polynomials $f_{2r+1}$ satisfies many intriguing properties, of which we mention several here. The polynomial
$f_{2r+1}(x,y)$ is congruent to $x^{2r+1}+ y^{2r+1}$ mod $(2r+1)$ if and only if $2r+1$ is either prime or $1$. The family of polynomials satisfies a third-order recurrence relation and has an explicit rational generating function. 
See \cite{Drat} for more information.

The number of terms in a polynomial will be of particular importance. Given a polynomial $f$ (in any number of variables),
$N(f)$ denotes the number of distinct monomials arising in $f$.
By \cite{DKR}, the polynomials $f_{2r+1}$ satisfy the following optimization property, or sharp {\bf degree estimate}.
Consider the collection of good polynomials $f$ of degree $d(f)$. Thus $f(x,y) =1$ on the line $x+y=1$, the coefficients of $f$ are non-negative, and $f(0,0)=0$.
Then $d(f) \le 2N(f)-3$ and equality holds for the polynomial $f_{2r+1}$, where $N=r+2$.
The real polynomial $f_{2r+1}$ is the squared Euclidean norm of a holomorphic polynomial $p(z,w):{\mathbb C}^2 \to {\mathbb C}^{r+2}$  that maps the unit sphere in the source
 to the unit sphere in the target. 
The number of terms is the minimum embedding dimension of such a map, and thus the sharp degree estimate $d \le 2N-3$ holds for these polynomials.
By \cite{LP},  the sharp degree estimate $d \le {N-1 \over n-1}$ holds in dimensions at least $3$. These bounds therefore limit the number of unknown coefficients one must consider.


We mention the formula for $\Phi_\Gamma$ for the group $\Gamma(7)$, where the source dimension is $3$. Write the variables as $(x,y,z)$.
This formula appears in both \cite{C} and \cite{DL}.

\begin{proposition} The basic polynomial for the group $\Gamma(7)$ is given by
\begin{align} \Phi(x,y,z) &= x^7 + y^7 + z^7 + 14(x^3 y^2 + x^2 z^3 + y^3 z^2 + xyz)\notag \\
& + 7(x^5 y + x z^5 + y^5 z + x y^3 + x^3 z + y z^3) \label{eq:fchiappari}\\
& + 7( x y^2 z^4 + x^2 y^4 z + x^4 y z^2 + x^2 y^2 z^2) \notag
\end{align}
 \end{proposition}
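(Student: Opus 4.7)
The plan is to expand the product formula furnished by the multi-variable generalization of Theorem~1.1. By Remark~1.3, for the fixed-point free cyclic subgroup $\Gamma = \Gamma(7) \subset {\bf U}(3)$ the basic polynomial is
\begin{equation*}
\Phi(x,y,z) = 1 - \prod_{j=1}^{7} \bigl(1 - \eta^j x - \eta^{2j} y - \eta^{4j} z\bigr).
\end{equation*}
Setting $\omega_j := \eta^j x + \eta^{2j} y + \eta^{4j} z$ and expanding as an alternating sum of elementary symmetric polynomials gives $\Phi = \sum_{k=1}^{7} (-1)^{k-1} e_k(\omega_1, \dots, \omega_7)$. My strategy is to compute the power sums $p_k = \sum_{j=1}^{7} \omega_j^k$ first and then extract the $e_k$ via Newton's identities.

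The multinomial theorem gives
\begin{equation*}
p_k = \sum_{a+b+c=k} \binom{k}{a,b,c} x^a y^b z^c \sum_{j=1}^{7} \eta^{j(a+2b+4c)},
\end{equation*}
where the inner sum equals $7$ when $7 \mid a+2b+4c$ and $0$ otherwise. This selection rule is precisely the condition that $x^a y^b z^c$ be $\Gamma$-invariant, and it immediately forces $p_1 = p_2 = 0$ since no triple of total weight $1$ or $2$ is invariant. For $k \in \{3,4,5,6,7\}$ I would enumerate the invariant triples---seventeen in all---and read the $p_k$ off the multinomial coefficients, obtaining for example $p_3 = 42\,xyz$, $p_4 = 28(x^3 z + x y^3 + y z^3)$, $p_5 = 70(x^3 y^2 + y^3 z^2 + x^2 z^3)$, and $p_7 = 7(x^7 + y^7 + z^7) + 735(x^4 y z^2 + x^2 y^4 z + x y^2 z^4)$.

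Newton's identities in the form $k\, e_k = \sum_{i=1}^{k} (-1)^{i-1} e_{k-i}\, p_i$, combined with $e_1 = e_2 = 0$, then yield
\begin{equation*}
e_3 = \tfrac{1}{3} p_3, \quad e_4 = -\tfrac{1}{4} p_4, \quad e_5 = \tfrac{1}{5} p_5, \quad e_6 = \tfrac{1}{6}(e_3 p_3 - p_6), \quad e_7 = \tfrac{1}{7}(e_4 p_3 - e_3 p_4 + p_7).
\end{equation*}
Substituting and summing $\Phi = e_3 - e_4 + e_5 - e_6 + e_7$ should reproduce \eqref{eq:fchiappari}. The only real obstacle is the bookkeeping at $k=6,7$, where the cross products $e_3 p_3$, $e_3 p_4$, and $e_4 p_3$ produce nontrivial cancellations: for instance, the coefficient of $x^4 y z^2 + x^2 y^4 z + x y^2 z^4$ in $7 e_7$ is $735 - 392 - 294 = 49$, yielding the clean value $7$ that appears in the last line of \eqref{eq:fchiappari}.
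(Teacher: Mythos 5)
Your derivation is correct, and it supplies an actual computation where the paper gives none: the paper simply cites Chiappari's thesis and D'Angelo--Lichtblau for formula \eqref{eq:fchiappari}, relying on Remark~1.3 for the fact that the product $1-\prod_{j=1}^{7}(1-\eta^j x-\eta^{2j}y-\eta^{4j}z)$ defines $\Phi_{\Gamma(7)}$. Starting from that same product is legitimate, and your Newton's-identities route checks out in every detail I verified: the selection rule $7\mid a+2b+4c$ is exactly monomial invariance, so $p_1=p_2=0$ and hence $e_1=e_2=0$; the seventeen invariant triples of degrees $3$ through $7$ split as $1+3+3+4+6$, matching the seventeen terms of $\Phi$; your values $p_3=42xyz$, $p_4=28(x^3z+xy^3+yz^3)$, $p_5=70(x^3y^2+y^3z^2+x^2z^3)$, $p_7=7(x^7+y^7+z^7)+735(x^4yz^2+x^2y^4z+xy^2z^4)$ are right, and the one you omit, $p_6=42(x^5y+xz^5+y^5z)+630\,x^2y^2z^2$, combines with $e_3p_3=588\,x^2y^2z^2$ to give $e_6=-7(x^5y+xz^5+y^5z+x^2y^2z^2)$; finally $735-294-392=49$ gives $e_7=x^7+y^7+z^7+7(x^4yz^2+x^2y^4z+xy^2z^4)$, and $e_3-e_4+e_5-e_6+e_7$ reproduces \eqref{eq:fchiappari} exactly. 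What your approach buys is a self-contained, mechanically checkable proof that also explains structurally why exactly these seventeen monomials occur (they are the invariant monomials of degree at most $7$) and where the coefficients $7$ and $14$ come from (multinomial coefficients divided by $k$, plus the cross terms $e_3p_3$, $e_4p_3$, $e_3p_4$); the cost is the bookkeeping at $k=6,7$, which you have correctly identified as the only delicate step. To make the writeup complete you should record $p_6$ explicitly, since $e_6$ cannot be extracted without it.
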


The main result in this paper is the following theorem on the gaps for invariant polynomials. The first part of the theorem is easy. The second part answers a question from \cite {BG}. The third part is completely new.

\begin{theorem}
Assume $f$ is a $\Gamma$-special polynomial; thus it satisfies the properties from Definition 1.1 for the group $\Gamma$. The following statements hold and are sharp.
\begin{itemize}
\item For $\Gamma(m,1)$, the number of terms $N(f)$ is an arbitrary non-negative integer.
\item For $\Gamma(2r+1,2)$, the number of terms $N(f)$ can be either $r+2$ or any positive integer at least $2r+3$. No other values are possible. 
\item For $\Gamma(7)$, the situation is considerably more complicated. See Theorem 5.1.
\end{itemize}
 \end{theorem}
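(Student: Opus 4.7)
The proof divides cleanly along the three bullets. Part (1) is essentially by inspection: in source dimension one the $\Gamma(m,1)$-invariants are generated by $x^m$, so any $\Gamma(m,1)$-special polynomial has the form $f(x) = \sum_{j=1}^{N} a_j x^{m k_j}$ for distinct positive integers $k_j$ and positive $a_j$ with $\sum_j a_j = 1$, and every $N \ge 1$ is realized by a free choice of such data.

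For part (2), the plan has four steps. First, for the \emph{lower bound}: every non-constant $\Gamma(2r+1,2)$-invariant monomial $x^a y^b$ satisfies $a + 2b \ge 2r+1$, and evaluating the identity $f(x,1-x)=1$ at $x = 0$ and at $x = 1$ forces $f$ to contain invariant monomials of the pure forms $y^{(2r+1)k}$ and $x^{(2r+1)\ell}$; hence $d(f) \ge 2r+1$. The sharp degree estimate $d(f) \le 2N(f) - 3$ of \cite{DKR} then yields $N(f) \ge r + 2$. Second, \emph{the characterization of the minimum}: if $N(f) = r+2$, then equality in the degree estimate forces $d(f) = 2r + 1$, and Corollary 1.1 identifies $f$ with $f_{2r+1}$. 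Third, and most delicate, is \emph{the gap argument}. For each target degree $D \ge 2r+2$ one enumerates the invariant monomials of each degree $d \le D$ (immediately above $2r+1$ these come only one or two per degree) and analyzes the overdetermined linear system obtained by expanding $f(x,1-x) \equiv 1$ as a polynomial identity in $x$. The non-negative solutions form a convex cone, and the goal is to show that every point of the cone has support of size either $r+2$ or at least $2r+3$. The case $r = 1$ is instructive: enumerating the invariant monomials of degree $\le 5$ yields a one-parameter family whose extreme rays have $N = 3$ and $N = 5$, while generic points give $N = 6$, so $N = 4$ is forbidden. For general $r$ the vanishing conditions on the low-order coefficients of $f(x, 1-x) - 1$, together with non-negativity, should force any departure from $f_{2r+1}$ to activate at least $r+1$ new monomials simultaneously. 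This combinatorial step is the technical heart of the proof and answers the question raised in \cite{BG}. Fourth, for the \emph{constructions} witnessing each $N \ge 2r+3$: once the gap is established, each desired $N$ is obtained by moving along the non-negative cone and then splitting a single high-degree term into several, using invariant monomials of yet higher degree where the supply is abundant.

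Part (3), concerning $\Gamma(7)$ with the plane constraint $x+y+z=1$, is genuinely different because the invariant algebra exhibited in Proposition 1.1 is substantially richer in three variables; its precise gap description is deferred to Theorem 5.1 in Section 5. The principal obstacle throughout is the gap argument in part (2); everything else is either a direct construction or a routine invocation of Corollary 1.1 together with the sharp degree estimate.
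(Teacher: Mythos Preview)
Your treatment of part (1) matches the paper (Remark 2.1). For part (2), however, the proposal has a genuine gap precisely where you flag the ``technical heart'': step (3) is a plan, not a proof. You work the case $r=1$ and then assert that in general ``the vanishing conditions on the low-order coefficients of $f(x,1-x)-1$, together with non-negativity, \emph{should} force any departure from $f_{2r+1}$ to activate at least $r+1$ new monomials simultaneously.'' That conditional is never discharged, and the direct linear-system analysis you suggest does not obviously scale: the number of invariant monomials of degree at most $4r+1$ grows with $r$, and you give no mechanism for controlling the support uniformly.

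The paper supplies the missing structural tool: every special $G$ can be written as $G = F - H + HF$ with $F = f_{2r+1}$ and $H$ invariant (Proposition 2.1). This single identity drives both halves of part (2). For the gap, if $N(G) \le 2r+2$ then the degree estimate gives $d(G) \le 4r+1$, which forces $\deg H < 2r+1$; invariance then pins $H$ down completely as $H = \sum_{j=1}^r \lambda_j\, x^{2r+1-2j} y^j$ with $0 \le \lambda_j \le c(r,j)$. If any $\lambda_j > 0$, a parity check on the exponent of $x$ shows the monomials of $HF$ are disjoint from those of $F-H$, whence $N(G) \ge N(F-H) + N(HF) \ge (r+1) + (r+2) = 2r+3$. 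For the constructions, the same $F - H + HF$ operation, with $H$ taken to be one monomial, a fraction of one monomial, or a block of consecutive monomials of $F$, produces special polynomials with exactly $2r+3$, $2r+4$, \dots\ terms; iterating on a highest-degree monomial (Propositions 2.2--2.3) then reaches every larger $N$. Your step (4), ``splitting a single high-degree term into several,'' gestures at this but is not concrete enough to hit $N = 2r+3$ on the nose. The decomposition $G = F - H + HF$ is the idea you are missing.
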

 
 In Section 6 we interpret these results in terms of sparse solutions to linear systems. Finding special polynomials amounts to solving a linear system for the coefficients,
 and finding the minimum number of possible non-zero coefficients seeks the sparsest solution. Gaps in the possible numbers of non-zero coefficients have a related interpretation.

In Section 7 we note several possible related directions of research.

The main new results in this paper are the proof of the second statement in Theorem 1.1 and almost all of Section 5, especially Theorem 5.1 and the methods used to obtain it.
The authors acknowledge Franc Forstneric for useful discussions from years ago, Xiaojun Huang for his work on related problems,  Jiri Lebl for his contributions to this area, 
and Ming Xiao for more recent conversations.

\section{Invariance and $N(f)$}

The following lemma about the number of terms in polynomials is obvious.

\begin{lemma} Suppose $F_1$ and $F_2$ are polynomials with no common terms. Then $N(F_1+F_2)= N(F_1) + N(F_2)$. In particular, if all the monomials in $F_2$ are of higher degree than those of $F_1$,
then $N(F_1+F_2) = N(F_1) + N(F_2)$. \end{lemma}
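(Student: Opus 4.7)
The plan is to unpack the definition of $N$ and observe that if two polynomials share no monomial, then their sum has no cancellation and the number of terms is additive. First I would write each polynomial in its canonical expansion $F_1 = \sum_{\alpha \in A_1} a_\alpha x^\alpha$ and $F_2 = \sum_{\beta \in A_2} b_\beta x^\beta$, where $A_1, A_2$ are the finite sets of multi-indices of monomials with nonzero coefficient; by definition $N(F_i) = |A_i|$. The hypothesis ``no common terms'' I would interpret as $A_1 \cap A_2 = \emptyset$, which is the only substantive point worth clarifying.

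Then I would simply add: $F_1 + F_2 = \sum_{\alpha \in A_1 \cup A_2} c_\alpha x^\alpha$, where $c_\alpha$ equals $a_\alpha$ on $A_1$ and $b_\alpha$ on $A_2$, this being unambiguous because the two index sets are disjoint. Every $c_\alpha$ is therefore nonzero, no merging or cancellation of monomials can occur, and the set of monomials appearing in $F_1 + F_2$ is exactly $A_1 \cup A_2$. Applying the elementary counting identity for disjoint sets yields $N(F_1+F_2) = |A_1| + |A_2| = N(F_1) + N(F_2)$.

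For the second assertion I would just note that if every monomial occurring in $F_2$ has degree strictly greater than the degree of every monomial occurring in $F_1$, then a fortiori no multi-index lies in both $A_1$ and $A_2$, so the disjointness hypothesis is met and the conclusion follows from the first part. There is no real obstacle here; the only thing to be careful about is reading ``no common terms'' as disjointness of supports rather than as some weaker condition on coefficients, and flagging that $N$ counts distinct monomials with nonzero coefficient so that the count is unaffected by the specific values of $a_\alpha$ and $b_\alpha$.
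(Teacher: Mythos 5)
Your proof is correct and is exactly the elementary support-disjointness argument the paper has in mind; the paper simply declares the lemma obvious and gives no proof, so you have just spelled out the intended reasoning. Your clarification that ``no common terms'' means disjoint supports (sets of multi-indices with nonzero coefficient) is the right reading and matches how the lemma is used throughout Sections 2--4.
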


Let ${\bf x}$ denote either $(x,y)$ or $(x,y,z)$.
We will be considering polynomials that equal $1$ on the sets $x+y=1$ or $x+y+z=1$. In the first case such a polynomial must be divisible by $x+y-1$, and in the second case by $x+y+z-1$.

When the polynomial is invariant, it must be also divisible by the terms obtained by replacing the variable ${\bf x}$ by $\gamma {\bf x}$, for each $\gamma$ in the group. 
Hence we have the following result.

\begin{proposition} \label{prop:construct} Assume $G$ is special for $\Gamma$.  Then there is an invariant polynomial $H$ such that
$ G = \Phi_\Gamma - H + H \cdot \Phi_\Gamma$.     \end{proposition}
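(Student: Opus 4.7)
The plan is to reformulate the desired identity as a divisibility assertion and then prove that divisibility using the product formula for $\Phi_\Gamma$ together with the invariance of $G$. The equation $G = \Phi_\Gamma - H + H \Phi_\Gamma$ rearranges to $1 - G = (1 - \Phi_\Gamma)(1 + H)$, so it will suffice to show that $1 - \Phi_\Gamma$ divides $1 - G$ in the polynomial ring, and that the resulting quotient is $\Gamma$-invariant with constant term $1$.

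To establish divisibility, I would use formula \eqref{eq:prod} (together with its analogs for $\Gamma(m,1)$ and $\Gamma(7)$, both of which admit the same product representation) to write $1 - \Phi_\Gamma = \prod_{\gamma \in \Gamma} L_\gamma$, where $L_\gamma({\bf x}) = 1 - \ell(\gamma \cdot {\bf x})$ and $\ell$ is the linear form defining the affine set on which $G \equiv 1$ (namely $x$, $x + y$, or $x + y + z$). Because $G$ is good, $1 - G$ vanishes on $\{\ell = 1\}$ and hence is divisible by $L_e$. Because $G$ is $\Gamma$-invariant, $1 - G$ also vanishes on each translate $\{\ell \circ \gamma = 1\}$, so it is divisible by each $L_\gamma$. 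The factors $L_\gamma$ all have constant term $1$ but distinct linear parts (each of the three representations acts faithfully on its natural source), so they are pairwise coprime, and therefore their product $1 - \Phi_\Gamma$ divides $1 - G$.

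To finish, I would write $1 - G = (1 - \Phi_\Gamma) Q$, evaluate at the origin (using $G(0) = \Phi_\Gamma(0) = 0$) to conclude $Q(0) = 1$, and set $H := Q - 1$. Invariance of $H$ follows from that of $Q$: applying any $\gamma \in \Gamma$ leaves both $1 - G$ and $1 - \Phi_\Gamma$ fixed, so $(1 - \Phi_\Gamma)(Q \circ \gamma) = (1 - \Phi_\Gamma) Q$, and cancellation in the polynomial ring yields $Q \circ \gamma = Q$.

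The only point requiring care is the pairwise coprimality of the linear factors $L_\gamma$, but this is immediate, since distinct linear polynomials sharing the nonzero constant term $1$ cannot be scalar multiples of one another. I therefore do not expect a genuine obstacle; the proposition is essentially a polynomial-ring consequence of the product formula for $\Phi_\Gamma$ combined with the invariance and good-ness of $G$.
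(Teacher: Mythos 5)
Your argument is correct and is essentially the paper's own proof: the paper observes that $G-\Phi_\Gamma$ is invariant and vanishes where $\Phi_\Gamma=1$, hence (by the divisibility-by-each-linear-factor reasoning sketched in the paragraph preceding the proposition, which is exactly your factorization of $1-\Phi_\Gamma$ into the forms $1-\ell(\gamma\cdot{\bf x})$) is divisible by $\Phi_\Gamma-1$ with invariant quotient $H$. You simply fill in the details the paper leaves implicit — pairwise coprimality of the linear factors, the constant term of the quotient, and the cancellation argument for invariance — so no substantive difference.
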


\begin{proof} Since $G - \Phi_\Gamma$ is invariant and vanishes where $\Phi_\Gamma=1$, it is divisible by $\Phi_\Gamma -1$. Since it is invariant, the quotient $H$ 
also is and thus $G - \Phi_\Gamma  = H \cdot (\Phi_\Gamma -1)$. \end{proof}

The operation of replacing $F$ with $F-H+FH$ is a special case of a partial tensor product operation introduced by the first author in his study of rational sphere maps. See \cite{Drat}. We will use this operation throughout.
The condition that our polynomials have non-negative coefficients requires care. We write $g \preceq h$ when all the coefficients of $h-g$ are non-negative. Suppose $0 \preceq F$.
Put $G= F-H + HF$. 
If $0 \preceq H \preceq F$, then obviously $0 \preceq G$. It is possible, however, that $0 \preceq G$, but that $0 \preceq F - H$ is false. 

\begin{example} Put $F=\Phi_\Gamma$ for any of the three groups. For each $k$, the polynomial $F^k$ is special. For $k\ge 3$, 
put $H= F+F^2 + \dots + F^{k-1}$. Then {\bf none} of the coefficients of $F-H$ are positive, and yet 
$$ F-H + HF = F-( F+F^2 + \dots +F^{k-1}) + (F^2 + F^3 + \dots + F^k) = F^k. $$ 
 \end{example}

We remind the reader that our primary purpose
is to determine precisely the possible values  of $N(G)$ for special $G$. We begin with two easy lemmas. 

\begin{lemma} Suppose $F=1$ on the line defined by $x+y=1$. Let $H$ be an arbitrary polynomial. Put $G = F - H + FH= F+H(F-1)$. Then $G=1$ on this line. If $F$ and $H$ are group invariant,
then so is $G$. Analogous  conclusions hold when $F=1$ on the plane $x+y+u=1$. \end{lemma}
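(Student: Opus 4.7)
The plan is to verify the two assertions (the value at the distinguished set, and the preservation of invariance) directly from the identity $G = F + H(F-1)$, and then observe that the argument transcribes verbatim to the plane case.

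For the first claim, I would restrict $G$ to the line $x+y=1$. The hypothesis gives $F(x,y)=1$ there, so $F-1$ vanishes identically on the line, hence so does $H(F-1)$, regardless of what $H$ is. Therefore $G = F + H(F-1) = 1 + 0 = 1$ on the line, which is precisely what is required. No computation beyond substitution is involved.

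For the invariance claim, I would note that the set of $\Gamma$-invariant polynomials is closed under the ring operations (addition, subtraction, and multiplication), since if $F(\gamma {\bf x}) = F({\bf x})$ and $H(\gamma {\bf x}) = H({\bf x})$ for every $\gamma \in \Gamma$, the same relation holds for $F \pm H$ and $FH$. Because $G$ is built from $F$ and $H$ using only these operations, $G$ is $\Gamma$-invariant as well.

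For the analogous statement with $F = 1$ on the plane $x+y+u=1$, the two steps above go through unchanged: restrict $G = F + H(F-1)$ to the plane and use $F \equiv 1$ there to conclude $G \equiv 1$, and invoke the ring-theoretic stability of invariants to obtain invariance of $G$. There is no genuine obstacle in this lemma — it is a formal manipulation recorded here because the construction $F - H + FH$ will be applied repeatedly in what follows; the only mild point worth flagging is that \emph{no} sign hypothesis on $H$ is used, so positivity of coefficients is not automatic and must be tracked separately, as Example 2.1 already cautioned.
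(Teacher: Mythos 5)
Your proof is correct and is exactly the routine verification the paper has in mind (the paper states this as one of ``two easy lemmas'' and omits the argument entirely): $F-1$ vanishes on the line so $G=F+H(F-1)=1$ there, and invariance follows because invariant polynomials form a ring. Your closing remark that no positivity of $H$ is used is a worthwhile observation consistent with the paper's own warning surrounding Example 2.1.
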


\begin{lemma} Suppose $F$ has non-negative coefficients.  Let $H$ consist of some terms of $F$, with corresponding coefficients that are positive but strictly smaller than those of $F$.
Then $N(F-H) = N(F)$. \end{lemma}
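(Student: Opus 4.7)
The statement is essentially a bookkeeping observation about monomial supports, and the plan is to make this bookkeeping explicit and isolate the single place where the hypothesis is used.

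The plan is to write $F$ on its monomial support: say $F = \sum_{\alpha \in A} c_\alpha {\bf x}^\alpha$ with every $c_\alpha > 0$, so that $N(F) = |A|$. By hypothesis, $H$ is supported on some subset $S \subseteq A$ and can be written $H = \sum_{\alpha \in S} d_\alpha {\bf x}^\alpha$ with $0 < d_\alpha < c_\alpha$. I would then compute
\[
F - H \;=\; \sum_{\alpha \in A \setminus S} c_\alpha {\bf x}^\alpha \;+\; \sum_{\alpha \in S} (c_\alpha - d_\alpha)\, {\bf x}^\alpha.
\]

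Next I would observe, monomial by monomial, that every coefficient appearing on the right is strictly positive: for $\alpha \in A \setminus S$ this is because $c_\alpha > 0$ by the definition of $A$, and for $\alpha \in S$ this is the content of the strict inequality $d_\alpha < c_\alpha$. Crucially, no new monomials outside $A$ appear, since $H$ contributes only monomials from $S \subseteq A$. Therefore the monomial support of $F - H$ is exactly $A$, so $N(F - H) = |A| = N(F)$.

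The only place the hypothesis is used is the strict inequality $d_\alpha < c_\alpha$, which prevents cancellation of a monomial of $F$ in $S$; if instead $d_\alpha = c_\alpha$ were allowed, that monomial would disappear and the equality would fail. There is no genuine obstacle — the proof is really just the observation that subtracting a strictly smaller positive multiple of an already-present monomial leaves the support invariant.
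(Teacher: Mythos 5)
Your argument is correct and is exactly the bookkeeping the paper has in mind: the paper states this as one of ``two easy lemmas'' and omits the proof entirely, and your explicit support-counting (strict inequality $d_\alpha < c_\alpha$ prevents cancellation, so the monomial support of $F-H$ equals that of $F$) is the intended justification. Nothing further is needed.
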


\begin{proposition} Given a good polynomial $f$ and a non-negative integer $a$,  there are good polynomials $g$ and $h$ such that 
$$ N(g) = a N(f)$$
$$ N(h) = a N(f) - (a-1).$$
When $f$ is group-invariant we may choose  $g$ and $h$ to be invariant.
\end{proposition}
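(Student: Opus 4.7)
The plan is to build $g$ and $h$ from $f$ by iterating the partial tensor product operation $F \mapsto F - H + FH$ with a carefully chosen monomial $H$. Starting from $G_0 = f$, I will define recursively
\[
G_k \;=\; G_{k-1} - c_k m_k + c_k m_k f \;=\; G_{k-1} + c_k m_k(f-1),
\]
where $m_k$ is a top-degree monomial of $G_{k-1}$ with positive coefficient $\kappa_k$ there, and $c_k \in (0,\kappa_k]$. After $a-1$ such iterations, choosing $c_k < \kappa_k$ at every step will give $g$, while choosing $c_k = \kappa_k$ at every step will give $h$.

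Goodness of $G_k$ carries over from $G_{k-1}$ at every step. On the line $x+y=1$ (respectively the plane $x+y+z=1$) the factor $f-1$ vanishes, so $G_k = G_{k-1} = 1$ there. The coefficients of $G_k$ are non-negative because $c_k \le \kappa_k$ forces $G_{k-1} - c_k m_k$ to be non-negative and $c_k m_k f$ is obviously non-negative. Finally $G_k(0) = 0$ because $m_k$ has positive degree. Invariance is equally automatic: since each of the cyclic groups in Examples 1.1--1.3 acts diagonally on the coordinates, every monomial appearing in an invariant polynomial is itself invariant. In particular $m_k$ is $\Gamma$-invariant whenever $G_{k-1}$ is, so the recursion preserves invariance.

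The term count is where the top-degree choice of $m_k$ is essential. Since $\deg(m_k)$ equals the maximum degree of $G_{k-1}$ and $f$ has no constant term, every monomial of $c_k m_k f$ has degree strictly greater than every monomial of $G_{k-1} - c_k m_k$; and the $N(f)$ monomials of $m_k f$ are pairwise distinct, being $m_k$ times the distinct monomials of $f$. Therefore $N(G_k) = N(G_{k-1} - c_k m_k) + N(f)$. When $c_k < \kappa_k$ no term of $G_{k-1}$ is killed, so $N(G_k) = N(G_{k-1}) + N(f)$; when $c_k = \kappa_k$ exactly one term vanishes, so $N(G_k) = N(G_{k-1}) + N(f) - 1$. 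A straightforward induction then yields $N(g) = a N(f)$ and $N(h) = a N(f) - (a-1)$.

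The only residual issue is that the recursion must have a top-degree monomial with positive coefficient available at every step. Fixing a single top-degree monomial $\mu$ of $f$ with coefficient $c$ in $f$ and iteratively setting $m_k = \mu^k$ makes this transparent: one checks inductively that the coefficient of $\mu^k$ in $G_{k-1}$ is $c_{k-1} c > 0$. The main point requiring care is precisely the disjointness of the new monomials from the old and the absence of cancellation among the new ones, both of which the maximal-degree choice of $m_k$ guarantees; everything else is bookkeeping.
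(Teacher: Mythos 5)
Your proposal is correct and is essentially the paper's own proof: iterate the operation $F \mapsto F - \lambda m + \lambda m f$ on a top-degree monomial $m$, taking the full coefficient at each step to lose one term (giving $h$) and a proper fraction of it to lose none (giving $g$), with the degree argument ensuring the $N(f)$ new terms are distinct from the old ones. Your explicit verification that a top-degree monomial with positive coefficient survives at every step (via $m_k=\mu^k$) is a detail the paper leaves implicit, but the argument is the same.
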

\begin{proof}
Given $f$, find a monomial $m$ in $f$ of largest degree. For $0 < \lambda < 1$, we form
$$ g = (f-\lambda m)  +\lambda mf, $$
$$ h = (f - m) + mf. $$
By the above lemmas, 
$N(g) = N(f) + N(f) = 2N(f)$ and $N(h) = 2N(f) -1$. Iterating each of these operations (always on a monomial of largest degree), we obtain the conclusion.
The invariance follows because $f$ is invariant if and only if each monomial in $f$  is invariant. \end{proof}

The proof yields the following useful result, which we call the Frobenius property or the postage stamp property.

\begin{proposition} Given special polynomials $f$ and $g$, and non-negative integers $a,b$, there is a special polynomial $H$ with $ N(H) = a N(f) + b N(g)$.
Furthermore, if $h$ any special polynomial, then there is a special polynomial $H$ with
\begin{equation} N(H) = N(h) + a N(f) + b N(g). \label{eq:5} \end{equation} \end{proposition}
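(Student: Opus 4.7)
The plan is to extend the iterative partial tensor product construction from Proposition 2.2 so that two different special polynomials can both be used to add terms. The core observation is the following single-step building block: given a special polynomial $F$ and any special polynomial $F'$, pick a monomial $m$ of maximal degree in $F$ whose coefficient is $c > 0$, and choose $\lambda \in (0,c)$. Set
$$\tilde F \;=\; (F - \lambda m) + \lambda m F' \;=\; F + \lambda m (F' - 1).$$
I claim $\tilde F$ is special and $N(\tilde F) = N(F) + N(F')$.

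First I would verify this building block. Invariance of $\tilde F$ follows from invariance of $F$ and $F'$ together with the fact that any monomial appearing in an invariant polynomial for these group actions is itself invariant (so $m$, and hence $mF'$, are invariant). The identity $\tilde F = F + \lambda m (F'-1)$ shows $\tilde F = 1$ on the relevant line or plane, since the parenthesized factor vanishes there. Non-negativity follows from Lemma 2.3 applied to $F - \lambda m$ (the strict inequality $\lambda < c$ keeps the coefficient of $m$ positive) together with $\lambda m F'$ having non-negative coefficients. For the exact count, $N(F - \lambda m) = N(F)$ since $m$ survives, and $N(\lambda m F') = N(F')$ since multiplication by a fixed monomial is a bijection on supports; the two supports are disjoint because $F'$ has zero constant term, so every monomial of $m F'$ has degree strictly greater than $\deg(m)$, whereas every monomial of $F - \lambda m$ has degree at most $\deg(m)$. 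Lemma 2.1 then yields $N(\tilde F) = N(F) + N(F')$.

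With this building block in hand, both statements of the proposition follow by iteration. For the second statement, start with $H_0 := h$ and apply the construction $a$ times using $F' = f$, then $b$ times using $F' = g$. At each stage one takes a monomial of maximal degree in the current polynomial, which is invariant for the same reason as above. The final polynomial is special and has $N(h) + aN(f) + bN(g)$ terms. The first statement is the special case obtained by starting with $H_0 := f$ (if $a \ge 1$) and performing $a-1$ iterations with $f$ then $b$ iterations with $g$, or the symmetric choice if only $b \ge 1$; the edge case $a = b = 0$ is vacuous since no special polynomial has zero terms.

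I expect no significant obstacle here. The two subtleties to watch are the strict inequality $\lambda < c$, which is what makes the term count exact rather than off by one, and the invariance of the monomial chosen at each step, which is automatic for these group actions. All the necessary ingredients are already in place from earlier in Section 2.
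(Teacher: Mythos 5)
Your proposal is correct and follows essentially the same route as the paper: the paper's proof also forms $h^* = h - tm + tmf$ for a highest-degree monomial $m$ and $0<t<1$, verifies specialness via Lemmas 2.2 and 2.3, iterates $a$ times with $f$ and $b$ times with $g$, and deduces the first statement by taking $h=f$ and replacing $a$ with $a-1$. Your extra details (disjointness of supports via the degree comparison, and the $a=b=0$ edge case) only make explicit what the paper leaves implicit.
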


\begin{proof} Choose a monomial $m$ in $h$ of highest degree, and (for $0 < t  < 1$), define $h^*$ by $h^*=  h - tm + tmf $. By Lemmas 2.2 and 2.3, $h^*$ is invariant, equals $1$ on the line $x+y=1$, and
has non-negative coefficients. Furthermore, $N(h^*)= N(h) + N(f)$. Iterating this process, each time on a monomial of highest degree, after $a$ steps 
we obtain a polynomial $h^{**}$ with $N(h^{**}) = N(h) + a N(f)$.
Now do the same using $g$ to obtain a special polynomial $H$ satisfying \eqref{eq:5}. The first statement follows from the second by using $h=f$ and replacing $a$ by $a-1$.
\end{proof}

\begin{proposition} Let $p(t)$ be a polynomial of the form
$$ p(t) = 1 + \sum_{j=1}^K c_j t^j $$
where each $c_j \ge 0$.  Suppose at least $k$ of the $c_j$ are positive. Put $F(t)=(1+t)^m$.
Then $N(Fp) \ge m+1+k$. \end{proposition}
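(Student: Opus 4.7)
The plan is to read the inequality as an elementary statement about sumsets in $\mathbb{Z}_{\ge 0}$, exploiting non-negativity of the coefficients to rule out cancellation in the product. Write $A \subset \mathbb{Z}_{\ge 0}$ for the support of $F$ (the set of exponents whose coefficient is positive) and $S$ for the support of $p$. Because $F$ and $p$ both have non-negative coefficients, any two different pairs $(i,j),(i',j')\in A\times S$ with $i+j = i'+j'$ produce \emph{additive}, never cancelling, contributions to the coefficient of $t^{i+j}$ in $Fp$. Hence $N(Fp)$ equals the cardinality of the sumset $A+S = \{i+j : i\in A,\,j\in S\}$, and the problem reduces to bounding $|A+S|$ from below.

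The first step is to pin down the two supports. Since every binomial coefficient $\binom{m}{i}$ with $0\le i\le m$ is strictly positive, $A = \{0,1,\ldots,m\}$ and $|A| = m+1$. Because $p$ has constant term $1$ and at least $k$ of the $c_j$ are positive, $S$ contains $0$ together with at least $k$ further positive integers, so $|S| \ge k+1$.

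The final step is to apply the standard lower bound $|A+S| \ge |A|+|S|-1$, valid for any two finite, nonempty subsets of $\mathbb{Z}$. Indeed, if $A = \{a_0 < a_1 < \cdots < a_p\}$ and $S = \{s_0 < s_1 < \cdots < s_q\}$, the chain
\[
a_0+s_0 < a_0+s_1 < \cdots < a_0+s_q < a_1+s_q < \cdots < a_p+s_q
\]
exhibits $p+q+1$ distinct elements of $A+S$. Substituting $|A|=m+1$ and $|S|\ge k+1$ gives $N(Fp) = |A+S| \ge (m+1)+(k+1)-1 = m+1+k$, as required.

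There is no substantive obstacle; the only subtle point is that the non-negativity hypothesis on both factors is exactly what prevents distinct pairs $(i,j)$ with equal sum from cancelling each other in $Fp$. Dropping the sign condition would allow such collisions and defeat the identification $N(Fp) = |A+S|$ on which the argument rests.
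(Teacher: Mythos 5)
Your proof is correct and is essentially the paper's argument repackaged in sumset language: the paper likewise uses non-negativity to rule out cancellation and exhibits the same L-shaped chain of $m+1+k$ distinct exponents (the $m+1$ monomials $t^0,\dots,t^m$ from the constant term of $p$, plus $t^{m+a_1},\dots,t^{m+a_k}$ from the top term of $F$). The identification $N(Fp)=|A+S|$ and the general bound $|A+S|\ge |A|+|S|-1$ are a clean way to say the same thing.
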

\begin{proof} The product $Fp$ contains the $m+1$ monomials $t^j$ for $0 \le j \le m$.  Suppose $a_1,\dots, a_k$ are distinct indices with $c_{a_i} > 0$. Then $Fp$ also contains the monomials whose exponents are
$m+a_1,\dots, m+a_k$. Hence there are at least $m+1+k$ terms.
 \end{proof}
 
 The product can contain a larger number of monomials, but no better estimate is possible. If $p(t) = (1+t)^k$, for example, then $N(Fp) = N((1+t)^{m+k}) = m+k+1$.
 
 \begin{example} Put $F(t) = (1+t)^m$ for $m\ge 2$.  Consider $p(t) = 1 + t + t^{m+1}$. Then $k=2$ but $N(Fp) = 2m+2 >m+3$. \end{example}
 
 \begin{remark} The proof of Theorem 1.2 is easy for $\Gamma(m,1)$ in source dimension $1$. We pause
 to make this observation now. Given an integer $m$ at least $2$, let $\eta$ be a primitive $m$-th root of unity. For $\lambda_j > 0$ and $\sum_{j=1}^d \lambda_j =1$, put
 $$ p(x) = \sum_{j=1}^d \lambda_j x^{mj}. $$
 Then $p(\eta x) = p(x)$, its coefficients are non-negative, $p(1)=1$, and $p$ has no constant term. Thus $p$ is {\bf special} for $\Gamma(m,1)$ in source dimension one, and
 $N(p)=d$.   Since $d$ is an arbitrary positive integer, there are no restrictions on $N(p)$ for $p$ special.  \end{remark}

We next describe the invariant monomials for each of our three classes of groups.
In any source dimension, the algebra of non-constant monomials invariant under $\Gamma(m,1)$ is generated by the constant $1$ and the homogeneous polynomials of degree $m$.
The degree of an invariant polynomial  with $f({\bf 0})=0$ must be a multiple of $m$.

The algebra of polynomials invariant under $\Gamma(2r+1,2)$ is generated by $1$, the monomials  $x^{2r+1}$, $y^{2r+1}$, and 
for $1 \le j \le r$, the monomials $x^{2r+1 -2j} y^j$.

For both of these classes of groups, the terms of the basic polynomial $\Phi_\Gamma$ are the basis elements of the algebra. That no longer holds for the group $\Gamma(7)$.
Several products are required. For the third group, with variables $x,y,z$, the algebra of invariant polynomials is generated by the following:
$$ x^7, y^7, z^7, x^5 y, x^3 y^2, xy^3, x^3 z, y^5 z, y^3 z^2, y z^3, xyz.  $$
But the basic polynomial from Proposition 1.1 has $17$ terms, including for example $x^2y^2z^2$ and $x^4 y z$. This situation, which was mentioned in Remark 1.4, changes the behavior for the gaps in possible values for $N(f)$.

For $\Gamma(2r+1, 2)$ the next easy result follows by combining the lemmas above.

\begin{proposition} Fix $r$. Let $F= f_{2r+1}$  denote the basic polynomial defined by Theorem 1.1. Formula \eqref{eq:22}, for certain positive integers $c(r,j)$, writes $F$ as:
$$ F(x,y) = x^{2r+1} + y^{2r+1} + \sum_{j=1}^r c(r,j) x^{2r+1-2j} y^j .$$
Let $H$ be defined by $\sum_{j=1}^r  \lambda_j x^{2r+1-2j} y^j$ where $0 \le \lambda_j < c(r,j)$ for each $j$.
Put $G=F-H + FH$. Then $ N(G) = N(F) + N(FH)$.
\end{proposition}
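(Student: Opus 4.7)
The plan is to apply Lemmas 2.1 and 2.3 to the decomposition $G = (F - H) + FH$. If I can show that $F - H$ and $FH$ share no monomials, then Lemma 2.1 gives $N(G) = N(F - H) + N(FH)$. Because the monomials of $H$ form a subset of those of $F$ with strictly smaller (and non-negative) coefficients, Lemma 2.3 then upgrades this to $N(F - H) = N(F)$, yielding the claimed identity. (If some $\lambda_j$ happen to vanish, those terms of $F$ simply survive unchanged in $F - H$, and the corresponding monomials still occur in $F - H$; one applies Lemma 2.3 to the sub-sum over the strictly positive $\lambda_j$, giving the same conclusion $N(F - H) = N(F)$.)

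The core of the argument is a total-degree comparison. Every monomial $x^a y^b$ appearing in $F$ satisfies the $\Gamma(2r+1,2)$-invariance $a + 2b \equiv 0 \pmod{2r+1}$, so its total degree $a+b = (a+2b) - b = 2r+1 - b$. Reading off the explicit formula \eqref{eq:22} shows that these total degrees range over the interval $[r+1, 2r+1]$: the value $2r+1$ is attained by $x^{2r+1}$ and $y^{2r+1}$, while the minimum $r+1$ is attained only by $xy^r$ (the $j=r$ term). The support of $H$ is contained in $\{x^{2r+1-2j}y^j : 1 \le j \le r\}$, whose total degrees lie in $[r+1, 2r]$; hence every monomial of $FH$ has total degree at least $(r+1) + (r+1) = 2r+2$, which strictly exceeds the maximum total degree $2r+1$ occurring anywhere in $F - H$.

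Since the degree ranges are disjoint, no cancellation or merging is possible between $F - H$ and $FH$, and the two polynomials have disjoint monomial supports. Invoking Lemma 2.1 and then Lemma 2.3 completes the proof. I do not foresee any serious obstacle: the argument is essentially a one-line degree count, and the only subtlety is the harmless handling of zero $\lambda_j$ when citing Lemma 2.3.
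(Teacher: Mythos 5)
Your argument is correct and is essentially the paper's own proof: the paper likewise observes that every term of $FH$ has strictly higher degree than every term of $F-H$ (since $\deg F = 2r+1$ while all monomials of $FH$ have degree at least $2r+2$) and then applies Lemma 2.1 together with Lemma 2.3. The only blemish is the line ``$a+b=(a+2b)-b=2r+1-b$,'' which fails for $y^{2r+1}$ where $a+2b=2(2r+1)$; but you immediately fall back on reading the degrees off the explicit formula, which gives the correct range $[r+1,2r+1]$, so the slip is harmless.
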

\begin{proof} Note that all terms in $FH$ are of higher degree than those of $F$. Thus  Lemma 2.1 implies $N(G) = N(F-H) + N(FH) = N(F) + N(FH)$. \end{proof}

We end this section with a warning. When the coefficients of polynomials $g,h$ can be of both signs, there is no simple way to relate $N(gh)$ to $N(g)$ and $N(h)$.

\begin{example} We give several examples.
\begin{itemize}
\item Put $g(x) = x^2 + 1 + \sqrt{2}x$ and $h(x) = x^2 +1 - \sqrt{2}x$. Then $N(g)=N(h)=3$, but $N(gh)=2$. 
\item Put $g(x,y) = x-y$ and $h(x,y) = x^m + x^{m-1}y + \dots + y^m$. Then their product $x^{m+1} - y^{m+1}$ has only two terms.
\item There are polynomials $p$ in one variable such that $p^2$ has fewer terms than $p$. See \cite{CD}. (Homogenizing then gives examples of homogeneous polynomials
in two variables with this property.)
\end{itemize} 
\end{example}

\section{Proof of first two parts of Theorem 1.2}
\begin{proof} In Remark 2.1 we have already observed the result for $\Gamma(m,1)$ in source dimension $1$.
We next give the proof for $\Gamma(2r+1,2)$. Let $F$ denote the basic map $\Phi_\Gamma = f_{2r+1}$. Recall that $N(F)=r+2$ and that
$$ F(x,y) = x^{2r+1} + y^{2r+1} + \sum_{j=1}^r c(r,j) x^{2r+1-2j} y^j .$$
We  first show that there is a special polynomial $G$ with $2r+3$ terms. In fact there are $r+2$ such examples.

We choose precisely one of the terms $c(r,j)x^{2r+1-2j}y^j $ and call it $H$. Thus $G= F-H+FH$, where
$$ G(x,y) = F(x,y) - c(r,j)  x^{2r+1-2j} y^j + \big(F(x,y) \big) c(r,j) x^{2r+1-2j} y^j. $$

We have multiplied a term of degree $2r+1-j$ by $F$, and hence $d(G)$ satisfies 
$$ d(G) = 2r+1-j  + (2r+1)= 4r+2-j \le 4r+1. $$
Since $F(x,y)=1$ on $x+y=1$, so does $G$. Each monomial occurring in $G$ is invariant. 
The non-negativity of the coefficients is clear; since the coefficient
of $x^{2r+1-2j} y^j$ in $F$ is precisely $c(r,j)$, we have eliminated one term and replaced it by $r+2$ terms with positive coefficients. We claim the resulting terms are distinct. Except for the term $H y^{2r+1}$, all
the exponents of $x$ in each of the new terms is even, and the exponents of $x$ in the old terms are all odd. The term $H y^{2r+1}$ is of degree larger than $2r+1$ and hence
it is also new.  Thus $G$ has $(r+1) + (r+2) = 2r+3$ terms (the old plus the new). This construction works for each $j$ with $1 \le j \le r$ and yields a different $G$. Hence there are at least $r$ such polynomials. We could also use the monomials $x^{2r+1}$ and $y^{2r+1}$ for $H$ to obtain two more examples with $2r+3$ terms.

Next we show how to get $N=2r+4$. The idea is similar, but this time we choose a number $\lambda$ with $0 < \lambda < c(r,j)$ and form
$$ G(x,y) = F(x,y) - \lambda x^{2r+1-2j} y^j + \big(F(x,y) \big) \lambda x^{2r+1-2j} y^j. $$
For this polynomial $G$, we have $ N(G) = (r+2)+ (r+2)= 2r+4$.

To obtain higher values, we use $k-j+1$ consecutive terms to define $H$. Thus we choose $\lambda_s$ such that $0 < \lambda_s < c(r,s)$. Put $ G = F- H + FH, $
where
$$ H(x,y) = \lambda_j x^{2r+1 - 2j} y^j + \dots + \lambda_k x^{2r+1-2k} y^k. $$ 
Now $F-H$ has $r+2$ terms, and $HF$ has $(r+2+ k-j)$ additional terms.
Thus $N(G) = (r+2) + (r+2 + k-j) $. We get all possible values for $N(G)$ for $G$ of degree up to $4r+2$ in this way.
To obtain higher values, we use these invariant polynomials in the same manner.

 

Suppose that $N(G) \le 2r+2$. We wish to show that $G=F$. 
By the degree estimate $d \le 2N-3$ from \cite{DKR},  if $N(g)\le 2r+2$, then 
 $$ d(G) \le 2(2r+2) - 3 = 4r+1. $$
If $H$ contained a term of degree $2r+1$, then $HF$ and hence $G$ would have a term of degree $4r+2$.  But $d(G) \le 4r+1$, and thus $d(H) < 2r+1$.
Since $H$ is invariant, and $H$ contains neither $x^{2r+1}$ nor $y^{2r+1}$, 
\begin{equation} H(x,y) = \sum_{j=1}^r \lambda_j x^{2r+1-2j} y^j. \label{eq:9} \end{equation}
To preserve  non-negativity of the coefficients of $G$, we must have $0 \le \lambda_j \le c(r,j)$ for each $j$. Thus we have found the general special polynomial of at most this degree.

 As before, write $G = F-H + FH$.
If all the $\lambda_j$ in \eqref{eq:9} are $0$, we obtain $G=F$ and $N(G)=r+2$. Otherwise, find the largest  $j$ for which $\lambda_j > 0$. If $\lambda_j=c(r,j)$, then 
$N(G) \ge N(F) -1 + N(F)$. If $\lambda_j< c(r,j)$, then $N(G) \ge N(F) + N(F)$. In either case, we obtain $N(G) \ge 2(r+2) -1 = 2r+3$. Thus $N(G)$ cannot lie in the integer interval $[r+3,2r+2]$.
We have already shown that $N(G)$ must be at least $r+2$ and that this value is possible. We have shown that each value at least $2r+3$ is possible. Thus we have proved the theorem for $\Gamma(2r+1,2)$.

\end{proof}

\begin{example} 

Put $r=5$. Then $F = f_{2r+1}$ satisfies
\begin{equation} F(x,y) = x^{11} + y^{11} + 11 x^9 y + 44 x^7 y^2 + 77 x^5 y^3 + 55 x^3 y^4 + 11 x y^5. \label{eq:10} \end{equation}
Put $H= 44 x^7 y^2 + 77 x^5 y^3 + 55 x^3 y^4$ and put $G= F- {H \over 11} + {FH \over 11}$. Then $G$ equals:

$$ x^{11} + 11 x^9 y + 40 x^7 y^2 + 4 x^{18} y^2 + 70 x^5 y^3 + 51 x^{16 }y^3 + 50 x^3 y^4 + 258 x^{14} y^4 + 11 x y^5 + 671 x^{12} y^5 $$
$$  + 979 x^{10} y^6 + 814 x^8 y^7 + 352 x^6 y^8 + 55 x^4 y^9 + y^{11} + 
 4 x^7 y^{13} + 7 x^5 y^{14} + 5 x^3 y^{15}.  $$ 
  The polynomial $G$ has $18$ terms; $7$ come from $F$ and $11$ more arise from multiplying
 three consecutive terms by $F$, creating $7+4=11$ more terms.
 \end{example}

\bigskip

\section{The group $\Gamma(m,1)$ in source dimension $2$}

We consider the group $\Gamma(m,1)$ when the source dimension is $2$.

\begin{theorem} Suppose $G$ is special for the group $\Gamma(m,1)$ and $G$ is of degree $Km$. Then there are homogeneous polynomials $H_k$ such that
$$ G = F-H_m + FH_m - H_{2m} + \dots + FH_{(K-1)M} - H_{Km} + F H_{Km}.$$
 If the $H_{jm}$ have non-negative coefficients, 
then $N(G) \ge Km+1$. \end{theorem}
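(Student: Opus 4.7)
The plan is to convert this bivariate problem to a univariate one via the substitution $y = tx$. Under this substitution, $\Gamma(m,1)$-invariance forces the degree-$jm$ homogeneous piece of $G$ to take the form $x^{jm} Q_j(t)$ for some polynomial $Q_j$ with $\deg Q_j \leq jm$, and likewise $H_{jm}(x, tx) = x^{jm} P_j(t)$. The basic polynomial becomes $F(x, tx) = x^m(1+t)^m$. Since monomials of different total degrees are automatically distinct, and within a fixed degree $jm$ the correspondence $x^{jm - b} y^b \leftrightarrow t^b$ is a bijection, one has $N(G) = \sum_{j=1}^K N(Q_j)$ and $N(H_{jm}) = N(P_j)$.

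Next, rewrite $G = F - H + FH$ as the cleaner identity $1 - G = (1-F)(1+H)$. Substituting $y = tx$ and equating coefficients of $x^{jm}$ yields the recursion
\[ Q_j(t) = (1+t)^m P_{j-1}(t) - P_j(t), \qquad 1 \leq j \leq K, \]
with the conventions $P_0 := 1$ and $P_K := 0$ (the latter because $G$ of degree $Km$ forces $H$ of degree at most $(K-1)m$). The hypothesis that every $H_{jm}$ has non-negative coefficients gives $P_j \succeq 0$ termwise, and specialness of $G$ gives $Q_j \succeq 0$.

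Non-negativity precludes cancellation, so the following three support bounds are immediate. First, $Q_1 + P_1 = (1+t)^m$ together with $Q_1, P_1 \succeq 0$ gives $N(Q_1) + N(P_1) \geq m + 1$. Second, for $2 \leq j \leq K-1$, the identity $Q_j + P_j = (1+t)^m P_{j-1}$ together with the standard integer sumset lower bound $|A + \{0,1,\ldots,m\}| \geq |A| + m$ yields $N(Q_j) + N(P_j) \geq N(P_{j-1}) + m$. Third, $Q_K = (1+t)^m P_{K-1}$ combined with the same sumset bound (noting $P_{K-1} \neq 0$ since $G$ truly has degree $Km$) gives $N(Q_K) \geq N(P_{K-1}) + m$. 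Summing these inequalities, the $N(P_j)$ terms for $1 \leq j \leq K - 1$ appear on both sides and cancel, leaving
\[ \sum_{j=1}^K N(Q_j) \geq (m+1) + (K-1)m = Km + 1, \]
which is exactly $N(G) \geq Km + 1$.

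The step I expect to require the most care is verifying the non-cancellation in both the sums $Q_j + P_j$ and the products $(1+t)^m P_{j-1}$; both follow once all polynomials involved are known to have non-negative coefficients, which the hypothesis guarantees. Existence of the claimed decomposition itself is just Proposition 2.1 applied to $G$, producing the invariant polynomial $H$, which is then split into its homogeneous pieces $H_{jm}$.
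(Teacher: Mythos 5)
Your proof is correct and is essentially the paper's argument: both decompose $G$ by homogeneous degree so that $N(G)$ is additive over the pieces, bound each piece from below by $m + N(H_{(j-1)m}) - N(H_{jm})$ using non-cancellation in products of polynomials with non-negative coefficients (the paper's Proposition 2.4, your sumset bound), and then telescope to get $Km+1$. Your dehomogenization $y = tx$ and the factorization $1 - G = (1-F)(1+H)$ are just cleaner packaging of the same recursion $Q_j = (1+t)^m P_{j-1} - P_j$ that the paper writes as $FH_{(j-1)m} - H_{jm}$.
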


\begin{proof} 

The result holds when $K=1$ because the only special polynomial of degree $m$ is the basic polynomial $F$, namely $(x+y)^m$, which has $m+1$ terms. 
Otherwise, by Proposition 2.1, we write $ G = (F-H) + HF$ for some invariant $H$. Note that $F(0,0)=G(0,0)=0$ implies $H(0,0)=0$. Write
$$ H= H_m + H_{2m} + \dots + H_{(K-1)m}$$
in terms of its homogeneous expansion. The term $H_{Km}$ must vanish, because otherwise $G$ would be degree $(K+1)m$.
From $G=(F-H)+HF$, we obtain
$$ G = (F-H_m) + (FH_m - H_{2m}) + \dots + ( FH_{(K-2)m} - H_{(K-1)m} ) + FH_{(K-1)m}. $$
Each of the terms in parentheses is of different degree and hence $N(G)$ equals the following sum:
$$ N(F-H_m) + N(FH_m - H_{2m}) + \dots + N(FH_{(K-2)m} - H_{(K-1)m} ) + N(FH_{(K-1)m}). $$
Also each term has non-negative coefficients. 
We group the terms in this sum in three parts: the first term, the last term, and the middle terms. Thus $$N(G) = N(F-H_m) + \sum_{j=1}^{K-2} N(FH_{jm} - H_{(j+1)m} ) + N(FH_{(K-1)m}) $$
Let $a_j = N(H_{mj})$. Since  $H_{(K-1)m} \ne0$, Proposition 2.4 implies that 
$$ N(FH_{(K-1)m}) \ge m+ a_{m-1}. $$
For the first term, we have 
$$N(F-H_m) \ge N(F) - a_1 = m+1 - a_1.$$
For the other terms, again by Proposition 2.4, we have
$$  N(FH_{jm} - H_{(j+1)m}) \ge N(  FH_{jm}) - a_{j+1} \ge m+ a_j - a_{j+1}. $$
The sum telescopes. Adding all the inequalities together yields

$$ N(G) \ge (m+1) + m(K-2) + m = Km+1. $$ 
\end{proof}

The restrictive assumption that the $H_j$ have non-negative coefficients implies Theorem 1.2 for the group $\Gamma(m,1)$ in source dimension $2$.
First we give an example showing that it can happen that some $H_j$ have at least one negative coefficient and yet $G$ is special.

\begin{example} 
Put $H_2(x,y) = x^2 - x y + y^2$. Define $G(x,y)$ by
$$ G(x,y) = (x+y)^2 - (x^2 -xy + y^2) + (x+y)^2 (x^2 - xy + y^2) = x^4 + 3 x y + x^3 y + x y^3 + y^4. $$
Thus $G$ has non-negative coefficients, $G= F-H_2 + FH_2$, and $H_2$ has a negative coefficient.

\end{example}

We prove our result for $\Gamma(m,1)$ under the restrictive condition that the 
polynomials $H_{jm}$ are non-negative.

\begin{proof} Let $G$ be special for $\Gamma(m,1)$. Then $G$ is of degree $Km$ for some positive integer $K$. By Theorem 4.1,  $N(G)= Km+1$.
When $K=1$, the only map is $F$, and $N(F)=m+1$. If $K\ge 2$, then $N(G) \ge Km+1\geq 2m+1$; it follows that $N(G)$ is in neither the integer interval $[1,m]$ nor the interval $[m+2,2m]$.

It remains to show that all larger integers are possible, even allowing this restrictive assumption.
We construct examples with $N$ terms for $N \ge 2m+1$. To obtain an example where $N=2m+1$, we put $$ G(x,y) = (x+y)^m - x^m + x^m (x+y)^m = F - H + HF.$$
Then $N(G) = (m) + (m+1) = 2m+1$. To obtain an example where $N=2m+2$, we put for $0 < \lambda < 1$,
$$ G(x,y) = (x+y)^m - \lambda x^m + \lambda x^m (x+y)^m$$ and hence
$N(G) = (m+1) + (m+1) = 2m+2$. In general, let $H$ be the first $k$ terms in the expansion of $(x+y)^m$ and let $0 < \lambda < 1$.
Again put $ G = F -\lambda  H + \lambda FH$. Then $N(G) = (m+1) + (m+k)$.
This procedure realizes all $N$ up to $3m+2$. For any $G$ with $N$ terms, we may choose a monomial $h$ of highest degree in $G$,
and replace $G$ with $G - h + hG$, and achieve $N(G) + m+1$ terms. If we replace $G$ with $G-\lambda h + \lambda h G$, then we achieve $N(G)+m+2$ terms.
Thus we can achieve everything in the interval $2m+1 \le N \le 4m+3$.
Continuing in this way, or by using the Frobenius property,
 we obtain examples with $N$ terms for each $N$ at least $2m+1$. We conclude that $N(G)$ cannot satisfy $1 \le N(G) \le m$ nor $m+2 \le N(G) \le 2m$, but is otherwise arbitrary.

 \end{proof}
 
 \begin{remark} When one allows $H$ to have negative coefficients, interesting cancellations can occur. The second author has an example where $p=4$ and the target dimension $N=8$, so the bound in Theorem 4.1 does not hold in general.
  \end{remark}

\section{The group $\Gamma(7)$.}

The squared Euclidean norm of the minimal invariant sphere mapping for this group is a real polynomial $F$ analogous to the polynomials $(x+y)^m$ and $f_{2r+1}(x,y)$ we have considered earlier. 
By Proposition 1.1 the basic polynomial is given by
\begin{align} F(x,y,z) &= x^7 + y^7 + z^7 + 14(x^3 y^2 + x^2 z^3 + y^3 z^2 + xyz)\notag \\
& + 7(x^5 y + x z^5 + y^5 z + x y^3 + x^3 z + y z^3) \label{eq:14}\\
& + 7( x y^2 z^4 + x^2 y^4 z + x^4 y z^2 + x^2 y^2 z^2).  \notag
\end{align}

The invariance is evident: $F(\eta x, \eta^2 y, \eta^4 z) = F(x,y,z)$. The connection to the group $\Gamma(7,2)$ is also evident. More subtle is that monomials other than the generators of the algebra of invariant polynomials arise in $F$.
For example, $x^2 y^2 z^2$ is the square of the basic invariant monomial $xyz$. This fact impacts the number of terms in a subtle way. Note also that the restriction to each of the two dimensional subspaces defined by setting a coordinate equal to $0$ gives the map $f_7$,
but with a different choice of seventh root of unity. Thus we see where the coefficients of $7$ and $14$ arise. It is a bit surprising that these integers are also coefficients of the terms that are products of all three variables.
We note that $F$ has $6$ terms of degree seven, $4$ terms of degree six, $3$ terms of degree five, $3$ terms of degree four, and $1$ term of degree three.

Let $G:{\mathbb R}^3 \to {\mathbb R}$ be a $\Gamma$-invariant polynomial with non-negative coefficients.
Suppose $G(x,y,z) = 1$ on the plane defined by $x+y+z=1$ and $G(0,0,0)=0$. We are interested in the possible values for $N(G)$. Proposition \ref{prop:construct} provides a method to construct such $G$. Begin with $F$ and let $m$ be any collection of terms of $F$ such that
all the coefficients of both $m$ and $F-m$ are non-negative. Thus $0 \preceq m \preceq F$. 
Then the polynomial $F-m + mF$ satisfies all of our conditions. More generally, if $G$ satisfies all of our conditions, then
$G- p + pG$ also does as long as $0 \preceq p \preceq G$. All examples are constructed in this fashion. Things are subtle because a term
might not satisfy $p \preceq F$, but does satisfy $p \preceq F - m+mF$.

The mapping $F$ from \eqref{eq:14} consists of $17$ terms all of degree at most $7$. 
There can be no smaller number of terms because each invariant polynomial that equals $1$ on the plane must be divisible by $F-1$.
Thus $1\le N(G) \le 16$ is impossible. 

Put $H(x,y,z) =14 xyz$. Consider the polynomial
$G= F-H + HF$. We have multiplied each term in $F$ by the same monomial, and hence $N(HF) =17 $. But $F-H$ and $HF$ overlap because there are four monomials in $F$
of degree at most $4$, namely $xyz, x^3 z, x y^3, yz^3$. When these get multiplied by $xyz$, we get no new monomials. The monomial $xyz$ is missing from $G$. Therefore
$$ N(G) = (17-1) + (17-4) = 29.$$ 

We continue by listing maps $G$ for which $N(G)$ takes on all the known possible values up to $57$. In each case,
   the map $G$ has the form $G= F-H + HF$ where $H$ is some invariant polynomial. Rather than writing all the lengthy formulas for $G$, 
  we write the (much shorter) formulas for $H$.    Given $k \ge 58$, the postage stamp principle implies that examples for which $N(G)=k$ exist. 
  
   \begin {itemize}
   \item Put $H=0$. Then $G=F$ and $N (G) = 17 $.
       \item Put $H = 14 xyz$. Then $N (G) = 29 $.
          \item Put  $H = \lambda 14 xyz$ for $0 < \lambda < 1 $. Then $N(G)=30$. 
           \item Put $H =  7x^3 z$. Then $N (G) = 32 $.
           \item Put $H = \lambda 7 x^3 z$ for $0 < \lambda < 1 $. Then $N(G)=33$. 
   
        \item Put $H= \lambda z^7$ for $0 < \lambda < 1 $. Then $N (G) = 34$.
        \item Put $H = 7 xy^3 + 14 xyz$. Then $N (G) = 37 $.
      
           \item Put $H = 7 xy^3 + \lambda 14 xyz$ for $0 < \lambda < 1 $. Then $N(G)=38$.
           \item Put $H = \lambda (7 xy^3 + 14 xyz) $ for $0 < \lambda < 1 $. Then $N (G) = 39 $.
        \item Put $H = 14 xyz + 14 x^3 y^2 $. Then $N (G) = 40 $.
           \item Put $H = 14 xyz + 203 x^2 y^2 z^2$. Then $N(G) = 41$.
         
             \item Put $H = 7 xy^3 + 7 yz^3 $. Then $N (G) = 42 $.
                \item Put $H = 7 xy^3 + 14 xyz + 7 y z^3 $. Then $N (G) = 43 $.

      \item Put $H = 7 xy^3 + \lambda 14 xyz + 7 yz^3 $ for $0 < \lambda < 1 $. Then $N (G) = 44 $.

         \item Put $H = \lambda 7 xy^3 + \lambda 14 xyz + 7 x^3 z$ for $0 < \lambda < 1 $. Then $N (G) = 45 $.
         \item Put $H = \lambda (7 xy^3 + 14 xyz + 7 yz^3) $ for $0 < \lambda < 1 $.   Then $N(G)=46$. 
                  \item Put $H = 7 xy^3 + 7 x^3 z + 14 xyz + 7 yz^3 $. Then $N (G) = 47$.
            \item Put $H = 7 xy^3 + 7 x^3 z + \lambda 14 xyz + 7 yz^3 $ for $0 < \lambda < 1 $. Then $N (G) = 48 $.
         \item Put $H = \lambda 7 xy^3 + 7 x^3 z + \lambda 14 xyz + 
      7 yz^3 $ for $0 < \lambda < 1 $, Then $N (G) = 49 $.
      
         \item Put $H = \lambda 7 xy^3 + 
      7 x^3 z + \lambda 14 xyz + \lambda 7 yz^3 $ for $0 < \lambda < 1 $. Then $N(G) = 50$.
    
         \item Put $H = \lambda (7 xy^3 + 7 x^3 z + 14 xyz + 7 yz^3) $ for $0 < \lambda < 1 $. Then $N (G) = 51 $.
         \item Put $ H = 
    14 x^3 y^2 + 7 x y^3 + 7 x^3 z + 
      14 x y z + \lambda 14 x^2 z^3 $ for $0 < \lambda < 1 $. Then $N (G) = 52 $.
         \item Put $ H = 14 x^3 y^2 + 7 x y^3 + 7 x^3 z + \lambda 14 x y z + \lambda 14 x^2 z^3 $  for $0 < \lambda < 1 $. Then $N(G)=53$.          
         \item Put $H = \lambda 14 x^3 y^2 + 7 x y^3 + 
      7 x^3 z + \lambda 14 x y z + \lambda 14 x^2 z^3 $  for $0 < \lambda < 1 $. Then $N(G) = 54$. 
               \item Put $H = \lambda 14 x^3 y^2 + \lambda 7 x y^3 + 
     7 x^3 z + \lambda 14 x y z + \lambda 14 x^2 z^3 $ for $0 < \lambda < 1 $. Then $N (G) = 55 $.
     \item Put $H = \lambda (14 x^3 y^2 + 7 x y^3 + 7 x^3 z + 14 x y z + 
     14 x^2 z^3) $ for $0 < \lambda < 1 $. Then $N (G) = 56 $.
      \item Put $H =  {x^7\over 2} + 13 xyz + 182 x^2y^2z^2$. Then $N(G)=57$.
   \end {itemize}
   
   \begin{remark} The map for which $N(G)=41$ exhibits an interesting phenomenon. 
  In this example, $F - 203 x^2y^2z^2 + (203 x^2y^2 z^2)F$ has the term $-196 x^2 y^2 z^2$. But when we subtract the term $14xyz$ and add back in $14xyz F$, this negative coefficient goes away.
  The arithmetic comes from $7 - 203 = -196 = -(14)^2$. Hence all coefficients  of $G$ are non-negative. If we perform the operations in the opposite order,
  we never see a negative coefficient!  The map for which $N(G)=57$ also illustrates this phenomenon. 
  It occurs because the basic polynomial $F$ includes terms such as $x^2y^2z^2$ that are not basis elements of the algebra of invariant polynomials.\end{remark}

\begin{example} To illustrate non-uniqueness, we give a different way to achieve $N(G)=51$.
Put $ H = 14 x^3 y^2 + 7 x y^3 +  7 x^3 z + 14 x y z + 14 x^2 z^3$. Then $N(G)=51$.  The example above used fewer terms in the polynomial $H$.
\end{example}

Let $G$ be special for $\Gamma(7)$. We will show, unless $N(G)=17$, it must be true that $N(G)\ge 29$.
We recall the degree estimate from [21]; when $n\ge 3$, we have $d \le \frac{N-1}{n-1}$. Here $n=3$ and hence $N \ge 2d+1$. To show that $N(G) \ge 29$ for all special polynomials $G$, it suffices to prove the result 
for $G$ of degree at most $13$. 

By Proposition 1.1, the basic polynomial $F$ for the group $\Gamma(7)$ is given by \eqref{eq:14}. 
Therefore  there are no special polynomials of degree less than $7$. We will show that there are none of degree $8$ or $9$ and that there is a one-parameter family of degree $10$. We then go on to study degrees $11,12,13$.

Let $G:{\mathbb R}^3 \to {\mathbb R}$ be a $\Gamma(7)$-invariant special polynomial. By Proposition 2.1, 
$G = F + H (F-1)= F-H + HF$ where $H$ is a $\Gamma(7)$-invariant polynomial.  We first show the simple statement that the degree of $G$ must be at least $10$ unless $G=F$. 

\begin{lemma} Let $G$ be a $\Gamma(7)$-invariant special polynomial.  Then either $G=F$ or $\text{deg}(G) \ge 10$. \end{lemma}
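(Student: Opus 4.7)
The plan is to invoke Proposition 2.1 to write $G = F - H + HF$ for some $\Gamma(7)$-invariant polynomial $H$, and then exploit the fact that every nonzero $\Gamma(7)$-invariant polynomial that vanishes at the origin has degree at least three. Evaluating the identity at the origin and using $G(0,0,0) = 0 = F(0,0,0)$ forces $H(0,0,0) = 0$, so $H$ has no constant term. If $H$ is identically zero, then $G = F$ and the first alternative of the lemma holds.

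Otherwise $H$ is a nonzero $\Gamma(7)$-invariant polynomial with no constant term. The algebra of $\Gamma(7)$-invariant polynomials is generated, as listed in Section 2, by the monomials
$$x^7,\ y^7,\ z^7,\ x^5 y,\ x^3 y^2,\ xy^3,\ x^3 z,\ y^5 z,\ y^3 z^2,\ yz^3,\ xyz,$$
and the minimum degree among these generators is three, attained uniquely by $xyz$. Consequently any $\Gamma(7)$-invariant polynomial without constant term has degree at least three, and in particular $\deg H \ge 3$.

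Because $\mathbb{R}[x,y,z]$ is an integral domain, the product of the top-degree homogeneous parts of $H$ and $F$ is nonzero, and hence $\deg(HF) = \deg H + \deg F \ge 3 + 7 = 10$. On the other hand, $\deg(F-H) = \max(7,\deg H) < \deg H + 7 = \deg(HF)$, so the top-degree part of $HF$ in the decomposition $G = (F-H) + HF$ cannot be cancelled by anything in $F-H$. Therefore $\deg G = \deg(HF) \ge 10$, completing the proof.

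The argument is short and presents essentially no serious obstacle; the one point that needs checking is the non-cancellation of the top-degree part of $HF$ in the sum $(F-H) + HF$, which reduces to the strict inequality $\deg(F - H) < \deg(HF)$ noted above. The argument does not use the positivity of the coefficients of $G$ at all — only the invariance and the good-polynomial conditions — which is consistent with the fact that this lemma is a purely degree-theoretic statement.
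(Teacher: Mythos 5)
Your proof is correct and follows essentially the same route as the paper: write $G = F - H + HF$ via Proposition 2.1 and use the fact that every non-constant $\Gamma(7)$-invariant monomial has degree at least $3$, so $\deg(HF) \ge 10$ when $H \ne 0$. Your explicit check that the top-degree part of $HF$ cannot cancel against $F-H$ is a small extra care the paper leaves implicit.
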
 
\begin{proof} Each non-constant invariant monomial\ invariant under $\Gamma(7)$ has degree at least $3$. Hence, if $H \ne 0$, we must have 
$$ \text{deg}(G) = \text{deg}(FH) = 7 + \text{deg}(H)\ge10. $$
\end{proof}

\begin{corollary} There is no invariant example $G$ with $18 \le N(G) \le 20$.   \end{corollary}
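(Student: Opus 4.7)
The plan is to combine the Huang--Ji--Xu style degree estimate $d \le \frac{N-1}{n-1}$ (cited as [LP] / [21] just before Proposition 1.1) specialized to $n=3$ with the preceding Lemma. Since $n=3$, any special $G$ satisfies $N(G) \ge 2\,\text{deg}(G)+1$, equivalently $\text{deg}(G) \le \frac{N(G)-1}{2}$.

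First I would assume, toward a contradiction, that a special $G$ exists with $18 \le N(G) \le 20$. The degree estimate then forces $\text{deg}(G) \le \tfrac{20-1}{2} = \tfrac{19}{2}$, so $\text{deg}(G) \le 9$. By the Lemma just proved, this inequality on the degree leaves only the possibility $G = F$. But $N(F) = 17$, contradicting $N(G) \ge 18$.

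There is essentially no obstacle here: the corollary is an immediate combination of (i) the sharp degree bound for sphere maps in source dimension $3$ and (ii) the structural Lemma that any nontrivial special polynomial picks up a factor of $F$ and thereby jumps to degree at least $10$. The one point to be careful about is that the degree estimate from [LP] is stated for squared norms of rational sphere maps, and we are applying it to our $G$; but $G$ is by definition a good $\Gamma(7)$-invariant polynomial of the type to which that estimate applies (non-negative coefficients, vanishing at the origin, equal to $1$ on the appropriate hyperplane), so the hypothesis is met. No further case analysis is required.
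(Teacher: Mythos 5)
Your proposal is correct and is essentially the paper's own argument: the paper applies the degree estimate $N \ge 2d+1$ directly to the dichotomy of the Lemma (either $G=F$ with $N=17$, or $\deg G\ge 10$ forcing $N\ge 21$), whereas you run the same two ingredients as a proof by contradiction. The logic is identical, so there is nothing further to add.
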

\begin{proof} By the lemma, either $G=F$ and $N(G) = 17$, or $N(G) \ge 2(10)+1 = 21$. \end{proof}

Next we find all examples of degree $10$. The only possibility is that $H= \lambda xyz$ for some $\lambda \ne 0$. Since all the coefficients of $G$ are positive, in fact $0 < \lambda \le 14$.
The resulting maps have the form
$$ G(x,y,z) = F(x,y,z) + \lambda xyz F(x,y, z) - \lambda xyz. $$
Then $N(G)$ is $29$ or $30$, depending on whether $\lambda =14$. 

\begin{remark} It is useful to think of the collection  ${\bf S}_{10}$ of all examples of degree at most $10$. We identify this set with the closed interval $[0,14]$ on the real line.
For larger degrees $d$, the set of examples ${\bf S}_d$ of degree at most $d$  will always be a compact convex subset of some ${\bf R}^m$. 
One must systematically study $N(G)$ for $G$ in the boundary of ${\bf S}_d$. When the degree is $10$, the dimension $m$ equals $1$ and the boundary consists of $\lambda=0$ and $\lambda=14$. When $\lambda = 0$, $N(G)=17$.
When $\lambda = 14$, $N(G)= 29$, and otherwise $N(G)=30$. \end{remark}

The authors regard degree $11$ as a particularly illustrative case; things are simple enough to be illuminating, but complicated enough to exhibit the key issues.
We describe all examples of degree $11$. From the $HF$ term, we must have $\text{deg}(H)\leq 4$ since $F$ has degree 7. 
Thus, we can give the general form of $H$ by taking all non-constant $\Gamma(7)$-invariant monomials of degree at most 4; we put 
$$H(x,y,z)= U xyz+ B yz^3 +C  x^3 z  + D x y^3 $$  The set ${\bf S}_{11}$ is a compact subset of ${\mathbb R}^4$. We list all the terms in $G$ in terms of the coefficients of $H$:
\begin{align*}
& z^7  && x^7  && y^7  &\\
& 14 y^3 z^2   && 14 x^2 z^3   && 14 x^3 y^2  &\\
& 7 y^5 z  && 7 x z^5  && 7 x^5 y  &\\
& (7-B) y z^3  && (7-C) x^3 z  && (7-D) x y^3  &\\
& 7 B y^2 z^6  && 7 C x^6 z^2  && 7 D x^2 y^6  &\\
& 14 B y^4 z^5  && 14 C x^5 z^4  && 14 D x^4 y^5  &\\
& 7 B y^6 z^4  && 7 C x^4 z^6  && 7 D x^6 y^4   &\\
& B y z^{10}  && C x^{10} z  && D x y^{10}  &\\
& B y^8 z^3  && C x^3 z^8  && D x^8 y^3  &\\
& (7 B+D) x y^3 z^7   && (B+7 C) x^7 y z^3   && (C+7 D) x^3 y^7 z   &\\ 
& (7 B+7 D) x^2 y^5 z^4   && (7 B+7 C)x^4 y^2 z^5  && (7 C+7 D)x^5 y^4 z^2  &\\
& (7+7 U+14 B)x y^2 z^4   && (7+7 U+14 C)x^4 y z^2   && (7+7 U+14 D)x^2 y^4 z   &\\
& (U+7 B) x y z^8  && (U+7 C) x^8 y z   && (U+7 D) x y^8 z  &\\
& (7 U+14 B) x^2 y z^6  && (7 U+14 C) x^6 y^2 z  && (7 U+14 D) x y^6 z^2  &\\
&(14 U+7 B+7 D) x y^4 z^3   && (14 U+7 B+7 C) x^3 y z^4  &&  (14 U+7 C+7 D) x^4 y^3 z &\\
& (7 U+7 B+7 D) x^2 y^3 z^5  && (7 U+7 B+7 C) x^5 y^2 z^3 && (7 U+7 C+7 D) x^3 y^5 z^2  &\\
& (14-U) x y z  && (14 U+7) x^2 y^2 z^2   && (7 U+14 B+14 C+14 D) x^3 y^3 z^3  &
\end{align*}

We have listed the terms in three columns. The last row is exceptional. In each other row, the three columns are isomorphic. One sends $x$ to $y$, $y$ to $z$, $z$ to $x$, and one sends $B$ to $C$, $C$ to $D$, and $D$ to $B$. This symmetry is a consequence of the group invariance, and something analogous holds in all degrees.

Note also that the coefficient $U$ plays a different role than $B,C,D$. The non-negativity of {\it all} the coefficients puts upper and lower bounds on $U,B,C,D$. 
All the coefficients of $G$ must be non-negative, and hence the lower bounds for $B,C,D$ are $0$. But $U$ can be negative.

We first discuss the case $U<0$. Each of $U+7B, U+7C,U+7D$ is a coefficient. If $U<0$, then we must have $B,C,D>0$. Of the coefficients, $B$ occurs alone twice, $7B$ alone twice, and $14B$ alone once. 
That gives $5$ positive coefficients. By the symmetry in $B,C,D$, we see that there are at least 15 positive coefficients of this type. The rows beginning with $7B+D$ and $7B+7D$ create $6$ more terms.
The first three rows generate $9$ more. Therefore the total number of positive coefficients  is at least $30$. The last row also has a positive coefficient. Thus $N(G) \ge 31$. In fact, at least three more terms must be positive.
The coefficients $U+7B$ and $7U+14B$ cannot both vanish, since $U < 0$. By symmetry the same holds for $C$ and $D$. We conclude finally that $N(G) \ge 34$.

Next we discuss the case $U> 0$: 
The five rows preceding the final row generate $15$ terms. The coefficients in the second and third columns in the last row are also positive. Thus there are at least $17$ terms in addition to the $9$ terms in the first three rows.
If $B=C= D=0$, the degree is $10$ but we are in degree $11$. Hence at least one of these is non-zero; say $B$. The five terms in which $B$ appears alone then guarantee at least $17+9+5=31$ terms. Thus $N(G)\ge 31$.

We're left with the case where $U=0$. If also $B=C=D=0$, then $H=0$ and $N(G)=17$. If none of $B,C,D$ are $0$, then there is a huge number of terms, as only four of the above  terms have a minus sign anywhere.
If one is $0$, say $D=0$, with $B,C \ne 0$, we see also a profusion of terms. Finally $D=C=0$ and $B>0$ yields at least $32$ terms.
To see this, set $U=C=D=0$ in the chart of terms. The first three rows create $9$ positive coefficients. The next rows create 
$$ 2 + 1+1+1+1+1 + 2+2+3+1+1 + 2 +2+3=23$$
additional terms, Thus $N(G)\ge 32$. In fact $32$ is possible by putting $B=7$. If $0<B<7$, then we get $33$ terms.

We have proved the following.
\begin{proposition} Let $G$ be a special polynomial for $\Gamma(7)$. If the degree of $G$ is $7$, then $N(G)=17$. There is no such $G$  of degree $8$ or $9$. If the degree of $G$ is $10$,
then then either $N(G)=29$ or $N(G)=30$. If the degree of $G$ is $11$, then $N(G)\ge 31$. \end{proposition}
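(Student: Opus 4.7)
The plan is to exploit the representation $G = F - H + HF$ from Proposition 2.1, where $H$ is $\Gamma(7)$-invariant with $H(0,0,0) = 0$. Since every nonconstant $\Gamma(7)$-invariant monomial has degree at least $3$, either $H = 0$ (so $G = F$ has degree $7$ and $N(G) = 17$) or $\deg G = \deg(HF) = 7 + \deg H \geq 10$. This settles the first three assertions: degree $7$ forces $G = F$, and degrees $8$ and $9$ are impossible.

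For $\deg G = 10$, $\deg H = 3$ forces $H = \lambda xyz$, because $xyz$ is the only $\Gamma(7)$-invariant monomial of degree $3$. The coefficient of $xyz$ in $G$ equals $14 - \lambda$, so non-negativity gives $0 < \lambda \leq 14$. The four monomials of $F$ of degree at most $4$ are $xyz, xy^3, x^3z, yz^3$, and multiplying them by $xyz$ yields $x^2y^2z^2, x^2y^4z, x^4yz^2, xy^2z^4$, each of which already occurs in $F$. Hence $\lambda xyz \cdot F$ contributes only $17 - 4 = 13$ new monomials. When $\lambda = 14$, the $xyz$ term of $F$ cancels and $N(G) = 16 + 13 = 29$; otherwise $N(G) = 17 + 13 = 30$.

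For $\deg G = 11$, we have $\deg H \leq 4$, and because $\deg G \neq 10$ at least one degree-$4$ invariant monomial appears in $H$. The invariants of degree at most $4$ are exactly $xyz, yz^3, x^3z, xy^3$, so
$$ H = U xyz + B yz^3 + C x^3 z + D xy^3, \qquad (B, C, D) \neq (0, 0, 0). $$
I would then expand $G = F - H + HF$ into the full table of coefficients, organized into three columns related by the cyclic symmetry $(x, y, z) \mapsto (y, z, x)$, $(B, C, D) \mapsto (C, D, B)$. Non-negativity of $G$ forces $B, C, D \geq 0$, while $U$ may be negative provided $U + 7B,\ U + 7C,\ U + 7D \geq 0$ and $U \leq 14$.

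The main obstacle is the case analysis on the sign of $U$. I would split into $U > 0$, $U < 0$, and $U = 0$, each time isolating a collection of table entries that must be strictly positive with distinct supporting monomials, then leveraging the cyclic symmetry to count efficiently. For $U > 0$, the five rows preceding the last contribute $15$ positive terms, the second and third columns of the last row are positive, and since at least one of $B, C, D$ (say $B$) is nonzero the five entries in which $B$ appears alone are positive, giving $9 + 17 + 5 = 31$. For $U < 0$, the constraints $U + 7B,\ U + 7C,\ U + 7D \geq 0$ force all of $B, C, D$ to be positive, and a symmetry-based tally over the rows where $B$, $C$, or $D$ appears alone plus the coupled rows yields at least $31$ (in fact at least $34$). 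The delicate sub-case is $U = 0$ with only one of $B, C, D$ nonzero; there one counts the $9$ terms from the first three rows together with at least $23$ further terms from rows involving the nonzero variable, giving $N(G) \geq 32$. Assembling these cases yields $N(G) \geq 31$ in degree $11$.
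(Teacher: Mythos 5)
Your proposal is correct and follows essentially the same route as the paper: the factorization $G=F-H+HF$ with the degree-$3$ lower bound on invariant monomials to rule out degrees $8$ and $9$, the identification of the four overlapping products $xyz\cdot\{xyz,xy^3,x^3z,yz^3\}$ to get $N(G)\in\{29,30\}$ in degree $10$, and the same table of coefficients with the trichotomy on the sign of $U$ (yielding $\geq 34$, $\geq 31$, and $\geq 32$ in the respective cases) for degree $11$. The only difference is that you leave the degree-$11$ coefficient table implicit where the paper writes it out in full.
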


As the degree of $H$ increases, more $\Gamma(7)$-invariant monomials are allowed. The coefficients of $G$ become more complicated and a more delicate analysis is required. From the structure of the coefficient expressions in $G$, the coefficient $U$ of $xyz$ in $H$ is the only coefficient that could be negative until we include the coefficient $V$ of $x^2y^2z^2$ in $H$. Thus, the case analysis for $\text{deg}(H)\geq 6$ is more involved than above. 
We now analyze the case where $H$ is of degree $5$. We have $7$ parameters. 
$$ H(x,y,z)= U xyz+ B yz^3 +C  x^3 z  + D x y^3  + R y^3 z^2   + S x^2 z^3  + T x^3 y^2. $$  
As before we list all the terms:
\begin{align*}
    & \hspace{-2cm} z^7   && x^7  && y^7  &\\
    & \hspace{-2cm} 7 y^5 z && 7 x z^5  && 7 x^5 y  &\\
    & \hspace{-2cm} (7-B) y z^3   && (7-C) x^3 z  && (7-D) x y^3  &\\
    & \hspace{-2cm} 7 B y^2 z^6   && 7 C x^6 z^2  &&  7 D x^2 y^6 &\\
    & \hspace{-2cm} B y z^{10}   && C x^{10} z  && D x y^{10}  &\\
    & \hspace{-2cm} (14-R) y^3 z^2   && (14-S) x^2 z^3  && (14-T) x^3 y^2  &\\
    & \hspace{-2cm} R y^3 z^9   && S x^9 z^3  && T x^3 y^9  &\\
    & \hspace{-2cm} R y^{10} z^2  && S x^2 z^{10}  && T x^{10} y^2  &\\
    & \hspace{-2cm} 7 R x y^5 z^6   && 7 S x^6 y z^5   && 7 T x^5 y^6 z  &\\
    &\hspace{-2cm}  (B+7 R) y^8 z^3   && (C+7 S) x^3 z^8  && (D+7 T) x^8 y^3  &\\
    &\hspace{-2cm}  (14 B+7 R) y^4 z^5   && (14 C+7 S) x^5 z^4  && (14 D+7 T) x^4 y^5  &\\
    & \hspace{-2cm} (7 B+14 R) y^6 z^4  && (7 C+14 S) x^4 z^6  && (7 D+14 T) x^6 y^4  &\\
    &\hspace{-2cm}  (7 B+D+7 R) x y^3 z^7   && (B+7 C+7 S) x^7 y z^3  && (C+7 D+7 T) x^3 y^7 z  &\\
    & \hspace{-2cm} (7 B+7 D+7 R+7 S) x^2 y^5 z^4   && (7 B+7 C+7 S+7 T) x^4 y^2 z^5  && (7 C+7 D+7 R+7 T) x^5 y^4 z^2  &\\
    & \hspace{-2cm} (7 R+S) x^2 y^7 z^3   && (7 S+T) x^3 y^2 z^7   && (7 T+R) x^7 y^3 z^2  &\\
    & \hspace{-2cm} (7+7 U+14 B) x y^2 z^4   && (7+7 U+14 C) x^4 y z^2  && (7+7 U+14 D) x^2 y^4 z  &\\
    &\hspace{-2cm}  (14 U+7 B+7 D+14 R) x y^4 z^3   && (14 U+7 B+7 C+14 S) x^3 y z^4  && (14 U+7 C+7 D+14 T) x^4 y^3 z  &\\
    &\hspace{-2cm}  (7 U+14 D+7 R) x y^6 z^2   && (7 U+14 B+7 S) x^2 y z^6  && (7 U+14 C+7 T) x^6 y^2 z  &\\
    & \hspace{-2cm} (U+7 B) x y z^8   && (U+7 C) x^8 y z  && (U+7 D) x y^8 z  &\\
    & \hspace{-2cm} (7 U+7 B+7 D+14 R+14 S) x^2 y^3 z^5   && (7 U+7 B+7 C+14 S+14 T) x^5 y^2 z^3  && (7 U+7 C+7 D+14 R+14 T) x^3 y^5 z^2  &\\
    & \hspace{-2cm}  (14-U) x y z  &&   &&  &\\
    & \hspace{-2cm}  (14 U+7) x^2 y^2 z^2  &&   &&   &\\
    &\hspace{-2cm}   (7 U+14 B+14 C+14 D+7 R+7 S+7 T) x^3 y^3 z^3 \hspace{-4cm} &&   &&   &\\
    &\hspace{-2cm}  (7 R+7 S+7 T) x^4 y^4 z^4 && && &\\
\end{align*}

We need to count the possible number of positive coefficients. As before we consider $U<0$, $U>0$, and $U=0$ separately. Since the degree of $H$ is five, at least one of $R,S,T$ is positive. 

Case $U<0$: There are $6$ terms with positive constant coefficients.
Next, since the coefficients $U+7B, U + 7C, U+7D$ appear, we must have $B,C,D > 0$. There are $2$ terms involving $B$ alone; replacing $B$ with $C$ and $D$ we get a total of $6$ terms (with positive coefficients) of this type.
The terms $B+7R$, $14B + 7R$, $ 7B + 14 R$, $7B+D+7R$,   $7B+7D+7R+7S$ each generates $3$ terms in this way. These give $15$ additional  positive coefficients. Since not all $R,S,T$ are $0$, we may suppose $R>0$. There are $3$ terms with $R$ alone, 
generating $3$ more positive coefficients.  The row starting $7R+S$ generates $2$ more. The coefficient  of $x^4 y^4 z^4$ in the last row must be positive. The term $14-U$ must be positive. We therefore have at least
$$ 6 + 6+15 +3+ 2+ 1 = 33 $$
terms with positive coefficients.

Case $U>0$: Assume $R>0$ but that $B,C,D =0$. Let $\mu = 2$ if $R=14$ and $\mu = 3$ if $0<R<14$. Let $\eta=1$ if $U=14$ and equal $0$ if $0<U< 14$. 
Going down the rows, we  get at least
$$   3+ 3 +   3 +  0 +  0  + \mu + 1+1+1+1+1+1+1+2 +3 +3 +3 +3 +3 + \eta +1 + 1 + 1= 35 + \mu + \eta \ge 37.$$
terms with positive coefficients.

Case $U=0$: In order that the degree is $12$, at least one of $R,S,T$ is positive. Assume  $R>0$. If $B>0$, but $C=D=0$, we proceed down the rows, getting
$$ 3+ 3+ 2+1+1+2 +1+1 + 1+ 1+1+1+ 2+ 3 +2 + 3+ 2+ 1 + 3 + 1 + 1 +1 + 1 = 38 $$ terms with positive coefficients.
We get even more such terms when $C$ and/or $D$ is positive. 
It remains to check the case where $U=B=C=D=0$. Let $\mu =2$ if $R=14$ and $\mu = 3$ if $0 < R < 14$. Going down the rows gives
$$ 3 + 3 + 3 + 0 + 0 + \mu + 1 + 1 +1 +1 +1 +1 + 1 + 2+ 2+3 + 1+1+0+2+1+1+1+1= 31 + \mu. $$
We get at least $33$ terms and thus have proved the following result.

\vspace{-0.18cm}
\begin{proposition} Let $G$ be a special polynomial for $\Gamma(7)$ of degree $12$. Then $N(G) \ge 33$. \end{proposition}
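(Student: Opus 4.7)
The plan is to start, as with degrees $10$ and $11$, from the decomposition $G = F - H + HF$ given by Proposition 2.1, where $H$ is a $\Gamma(7)$-invariant polynomial. Since $F$ has degree $7$ and $G$ has degree $12$, the polynomial $H$ must have degree exactly $5$: degree less would give $\deg G \le 11$, and degree more would force $\deg G > 12$. The $\Gamma(7)$-invariant monomials of degree at most $5$ are the seven monomials $xyz$ (degree $3$), $yz^3, x^3z, xy^3$ (degree $4$), and $y^3z^2, x^2z^3, x^3y^2$ (degree $5$), so I write
$$H = U\,xyz + B\,yz^3 + C\,x^3z + D\,xy^3 + R\,y^3z^2 + S\,x^2z^3 + T\,x^3y^2,$$
with the constraint that at least one of $R,S,T$ is nonzero, lest $\deg H \le 4$.

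The first step is to expand each coefficient of $G = F - H + HF$ as an explicit affine function of $U,B,C,D,R,S,T$. This is mechanical; the computation is roughly cut by a factor of three using the cyclic symmetry $x\mapsto y\mapsto z\mapsto x$ together with $B\mapsto C\mapsto D\mapsto B$ and $R\mapsto S\mapsto T\mapsto R$ inherited from the group action, so I only need to write down one column of coefficients and then cyclically rotate. Inspecting the resulting list, one sees that every coefficient is a nonnegative-integer combination of the parameters except for the two exceptional coefficients $(14-U)$ on $xyz$ and $(14U+7)$ on $x^2y^2z^2$. Consequently $B,C,D,R,S,T$ must satisfy $\ge 0$, and only $U$ can be negative, and even then only in the range dictated by the $xyz$ coefficient.

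The heart of the argument is then a case analysis on the sign of $U$. If $U<0$, then the coefficients $U+7B$, $U+7C$, $U+7D$ (on $xyz^8, x^8yz, xy^8z$) force $B,C,D>0$; combined with one of $R,S,T$ positive, I then count coefficients term by term, using the threefold symmetry to multiply base counts. If $U>0$, even more coefficients become automatically positive, and a similar (but easier) count suffices. If $U=0$, I split further according to which subsets of $\{B,C,D\}$ and $\{R,S,T\}$ are nonzero; the most restrictive sub-case is $U=B=C=D=0$ with only a single one of $R,S,T$ nonzero. In each case I must produce the lower bound $N(G)\ge 33$.

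The main obstacle is not any single deep idea but the bookkeeping: enumerating every one of the roughly seventy coefficients of $G$ and, in each sub-case, verifying that the count of strictly positive coefficients reaches $33$ without double-counting among the cyclic triples. I anticipate the tightest sub-case being $U=0$, $B=C=D=0$, with exactly one of $R,S,T$ positive and equal to $14$ (so that the coefficient $14-R$ on $y^3z^2$ vanishes, killing one otherwise automatic term); ruling out that $N(G)=32$ slips through in this corner will require the most care, and drives the argument.
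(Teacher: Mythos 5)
Your proposal follows the paper's proof essentially verbatim: the same decomposition $G=F-H+HF$ with $H=U\,xyz+B\,yz^3+C\,x^3z+D\,xy^3+R\,y^3z^2+S\,x^2z^3+T\,x^3y^2$, the same observation that only $U$ can be negative, the same case split on the sign of $U$ (with the sub-case $U=B=C=D=0$ and a single one of $R,S,T$ nonzero being the tight one, yielding $31+\mu\ge 33$), and the same use of the cyclic symmetry to organize the term list. The only remaining work is the explicit term-by-term count you defer, which is exactly the bookkeeping the paper carries out.
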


\vspace{-0.18cm}
Finally we must deal with the harder case when $G$ has degree $13$. Thus $H$ has degree $6$.
There are 11 (non-constant) $\Gamma(7)$-invariant monomials of degree at most 6. Thus we put \vspace{-0.1cm}
\begin{align}\label{eq:H}
H&=U xyz+Byz^3+Cx y^3 +Dx^3z+Ry^3 z^2+Sx^2z^3+T x^3y^2\\&+K  y^5z+ L x z^5+M x^5 y + V x^2 y^2 z^2. \notag
\end{align}
We note the symmetries in these parameters. The numbers $U$ and $V$ play special roles. 
The parameters $B,C,D$ behave symmetrically, the parameters  $R,S,T$ behave symmetrically, and $K,L,M$ also do.
We illustrate this symmetry now. 

The invariant monomials are listed in such a way that $B$ is the coefficient of the monomial of degree $4$ omitting $x$. Then permuting the variables by $x\to y \to z\to x$ requires permuting the parameters by $B \to C \to D \to B$.
The same holds for $R,S,T$ and $K,L,M$. This symmetry explains the table of terms; the second and third terms in each row are obtained from  the first through these permutations. On the next page, we give the terms of the general example of degree at most $13$. In the following list, it is convenient to write $\phi(a,b,c)= a+7b+14c$.
\newpage
\begin{align*}
& \hspace{-1cm} z^7 &&  x^7  && y^7 & &\\
& \hspace{-1cm}(7-B) y z^3 && (7-C) x^3 z  && (7-D) x y^3 & &\\
& \hspace{-1cm}7 B y^2 z^6 && 7 C x^6 z^2 && 7 D x^2 y^6 &\\
& \hspace{-1cm}B y z^{10} && C x^{10} z && D x y^{10} &\\
& \hspace{-1cm}(14-R) y^3 z^2 && (14-S) x^2 z^3 && (14-T) x^3 y^2 &\\
& \hspace{-1cm}R y^3 z^9 && S x^9 z^3 && T x^3 y^9 &\\
& \hspace{-1cm}(14 B+7 R) y^4 z^5 && (14 C+7 S) x^5 z^4 && (14 D+7 T) x^4 y^5 &\\
& \hspace{-1cm}(7 K+L) x y^7 z^5 && (7 L+M) x^5 y z^7 && (K+7 M) x^7 y^5 z  &\\
& \hspace{-1cm}(7 K+7 L+7 R) x y^5 z^6  && (7 L+7 M+7 S) x^6 y z^5 && (7 K+7 M+7 T) x^5 y^6 z &\\
& \hspace{-1cm}(7-K) y^5 z  && (7-L) x z^5 && (7-M) x^5 y &\\
& \hspace{-1cm}K y^5 z^8 && L x^8 z^5 && M x^5 y^8 &\\
& \hspace{-1cm}K y^{12} z && L x z^{12} && M x^{12} y &\\
& \hspace{-1cm}(7 B+7 K+14 R) y^6 z^4 && (7 C+7 L+14 S) x^4 z^6 && (7 D+7 M+14 T) x^6 y^4 &\\
& \hspace{-1cm}(7 K+R) y^{10} z^2 && (7 L+S) x^2 z^{10} && (7 M+T) x^{10} y^2 &\\
& \hspace{-1cm}(B+14 K+7 R) y^8 z^3 && (C+14 L+7 S) x^3 z^8 && (D+14 M+7 T) x^8 y^3 &\\
& \hspace{-1cm}(7 B+D+14 L+7 R) x y^3 z^7 && (B+7 C+14 M+7 S) x^7 y z^3 && (C+7 D+14 K+7 T) x^3 y^7 z &\\
& \hspace{-1cm}(7+7 U+14 B) x y^2 z^4 && (7+7 U+14 C) x^4 y z^2  && (7+7 U+14 D) x^2 y^4 z &\\
& \hspace{-1cm}(14 U+7 B+7 D+14 R) x y^4 z^3 && (14 U+7 B+7 C+14 S) x^3 y z^4 && (14 U+7 C+7 D+14 T) x^4 y^3 z  &\\
& \hspace{-1cm}(7 U+14 D+14 K+7 R) x y^6 z^2 && (7 U+14 B+14 L+7 S) x^2 y z^6 && (7 U+14 C+14 M+7 T) x^6 y^2 z &\\
& \hspace{-1cm}(U+7 D+7 K) x y^8 z  && (U+7 B+7 L) x y z^8  &&  (U+7 C+7 M) x^8 y z&\\
& \hspace{-1cm}(7 K+7 R+S+7 V ) x^2 y^7 z^3 && (7 L+7 S+T+7 V )x^3 y^2 z^7 && (7 M+7 T+R+7 V ),x^7 y^3 z^2 &\\
& \hspace{-1cm}\phi(0,B+D+R+S,K+V) x^2 y^5 z^4 \hspace{-0.25cm}&& \phi(0,C+B+S+T,L+V)x^4 y^2 z^5 \hspace{-0.25cm}&& \phi(0,C+D+R+T,M+V)x^5 y^4 z^2 \hspace{-0.25cm}&\\
& \hspace{-1cm}\phi(V,U+B+D+L,R+S)x^2 y^3 z^5 \hspace{-0.25cm}&& \phi(V,U+C+B+M,S+T)x^5 y^2 z^3 \hspace{-0.25cm}&& \phi(V,U+D+C+K,T+R)x^3 y^5 z^2  \hspace{-0.25cm}&\\
& \hspace{-1cm}(7 K+V )x^2 y^9 z^2 && (7 L+V )x^2 y^2 z^9  && (7 M+V )x^9 y^2 z^2  &\\
& \hspace{-1cm}(7 K+7 V )x^4 y^6 z^3  && (7 L+7 V )x^3 y^4 z^6 && (7 M+7 V )x^6 y^3 z^4  &\\
&\hspace{-1cm}(14-U) x y z  &&   &&   &\\
&\hspace{-1cm} (14 U-V +7)x^2 y^2 z^2 &&  &&  &\\
& \hspace{-1cm}(7 U+14 B+14 C+14 D+7 R+7 S+7 T+14 V )x^3 y^3 z^3 \hspace{-4cm} &&  &&  &\\
& \hspace{-1cm}(7 R+7 S+7 T+7 V )x^4 y^4 z^4.   &&  &&  &
\end{align*}


\vspace{-0.4cm}
\begin{theorem}
Let $G:{\mathbb R}^3 \to {\mathbb R}$ be a $\Gamma(7)$-invariant special polynomial. Then the number $N(G)$ of terms satisfies the following:
\begin{itemize}
\item  There is no $G$ with $0\leq N(G)<17$.
\item There is no $G$ with  $17<N(G)<29$. 
\item There are $G$ with $N(G) = 29,30, 32,33,34$ and $N(G) = k$ for each $k \ge 37$.
\item The authors believe that the numbers $31,35,36$ are impossible but have not proved this statement.
\end{itemize} 
\end{theorem}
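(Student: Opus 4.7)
The plan is to combine the structural identity $G = F - H + HF$ from Proposition 2.1 with the degree estimate from \cite{LP}, reducing the problem to a finite case analysis plus explicit constructions. For the first two bullets it suffices to show that $N(G) \ge 29$ whenever $H \ne 0$, i.e.\ whenever $G \ne F$. I would split on $d := \deg(G)$. The cases $d = 7$ (which forces $G = F$), $d \in \{8,9\}$ (impossible, since every non-constant $\Gamma(7)$-invariant monomial has degree at least $3$), and $d \in \{10, 11, 12\}$ are already covered by Lemma 5.1, Proposition 5.1, and Proposition 5.2. For $d \ge 14$, the sharp degree estimate $N \ge 2d + 1$ from \cite{LP} gives $N(G) \ge 29$ at once.

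The remaining case $d = 13$, equivalently $\deg(H) = 6$, is where the main obstacle lies. Here $H$ ranges over the eleven-parameter family \eqref{eq:H}, and the coefficients of $G$ are listed in the large table immediately preceding the theorem. I would proceed in the same spirit as the degree $11$ and $12$ arguments: exploit the cyclic symmetry that simultaneously permutes $(x,y,z)$ and the triples $(B,C,D)$, $(R,S,T)$, $(K,L,M)$, and then case-split on the signs of $U$ and $V$, the only two parameters that may be negative while still allowing $G$ to have non-negative coefficients. For each sign combination I would carry out a row-by-row count of positive coefficients, using the fact that $U < 0$ or $V < 0$ forces several auxiliary parameters to be strictly positive in order to preserve non-negativity elsewhere, and each such forced parameter contributes several new positive coefficients through the symmetric triples.

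For the third bullet, the $26$ explicit choices of $H$ listed in Section 5 realize every value of $N(G)$ in $\{17, 29, 30, 32, 33, 34\} \cup \{37, 38, \ldots, 57\}$. To cover all $k \ge 58$ I would invoke the Frobenius property of Proposition 2.3 in the following concrete form. Given any special $G$, let $m$ be a monomial of top degree in $G$ with coefficient $c > 0$. The operations $G \mapsto G - m + m F$ and $G \mapsto G - \lambda m + \lambda m F$ with $0 < \lambda < c$ add $N(F) - 1 = 16$ and $N(F) = 17$ to $N(G)$ respectively, since $mF$ has strictly higher degree than $G$. Because $\gcd(16,17) = 1$ and the base $\{37, 38, \ldots, 57\}$ already contains $21 > 17$ consecutive integers, the closure of this base under these two operations is $\{37, 38, 39, \ldots\}$, which gives every $k \ge 37$.

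The genuine sticking point is the degree-$13$ analysis. The difficulty is twofold. First, the parameter space is eleven-dimensional (versus four and seven in the $d=11, 12$ cases handled in the text). Second, the presence of the non-generator monomial $x^2 y^2 z^2$ in $F$ permits cancellations of the form illustrated in Remark 5.2, so one cannot assume that a non-zero parameter contributes a fresh positive term in $G$; in particular the coefficients $14U - V + 7$ of $x^2 y^2 z^2$ and $7U + 14(B+C+D) + 7(R+S+T) + 14V$ of $x^3 y^3 z^3$ can mask contributions that, generically, one would expect to see. The cyclic symmetry reduces the independent case work by a factor of three but does not eliminate it. Part (iv) is explicitly conjectural; confirming or refuting it would require pushing the same enumeration through all degrees $d$ with $2d + 1 \le 36$ (i.e.\ $d \le 17$) and verifying that no parameter configuration yields $N(G) \in \{31, 35, 36\}$, a task the authors have not completed.
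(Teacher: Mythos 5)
Your skeleton is exactly the paper's: write $G=F-H+HF$, dispose of degrees $7$--$12$ via Lemma 5.1 and Propositions 5.1--5.2, kill degrees $\ge 14$ with the estimate $N\ge 2d+1$ from \cite{LP}, realize the attainable values with the twenty-six explicit choices of $H$, and close out $k\ge 58$ (in fact $k\ge 37$) by the postage-stamp mechanism. Your explicit version of that last step --- increments of $N(F)-1=16$ and $N(F)=17$ applied to a top-degree monomial, together with the run of $21$ consecutive base values $37,\dots,57$ --- is correct and slightly more detailed than the paper's one-line appeal to Proposition 2.3.

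The genuine gap is that the degree-$13$ case, which is the entire substance of the lower bound $N(G)\ge 29$ and the only place the eleven-parameter family \eqref{eq:H} is needed, is described but never executed. You correctly identify the right case split (the signs of $U$ and $V$, the only parameters that may be negative, with the cyclic symmetry permuting $(B,C,D)$, $(R,S,T)$, $(K,L,M)$), but ``carry out a row-by-row count'' is not a proof: the count is tight. For instance, in the subcase $V\ge 0$, $U=0$ the paper's tally is $3+17+1+3+3+2=29$ exactly, so a single miscounted or double-counted row would break the bound; and in the subcase $V<0$ one must first deduce $K,L,M>0$ from the coefficients $V+7K$, $V+7L$, $V+7M$ and then extract one more positive parameter among $R,S,T$ from $7R+7S+7T+7V\ge 0$ before the count reaches $29$. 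Until those four subcases are actually tallied against the table of coefficients, the second bullet of the theorem is unproved for $d=13$. (Your remarks on the masking effect of $14U-V+7$ and of the $x^3y^3z^3$ coefficient, and your reduction of part (iv) to degrees $d\le 17$, are accurate and consistent with the paper's closing discussion.)
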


\begin{proof}
Our list of examples include those of degree $17, 29,30, 33,34$ as well as all degrees from $37$ to $57$. By the postage stamp principle from Proposition 2.3, there are examples for all integers exceeding $57$.
Since $G(0)=0$ by definition of a special polynomial, degree $0$ is impossible. The minimum possible value of $N(G)$ occurs when $G$ is the basic polynomial  $F$ and therefore $0 \le N(G) < 17$ is impossible.

Let $G$ be a $\Gamma(7)$-invariant special polynomial. We write $G = F-H + HF$, as usual, where $H$ is some $\Gamma(7)$-invariant polynomial. Since $G$ is non-constant, we have $N(G)\neq 0$. 
When the degree of $G$ is at most $12$, the previous Propositions show that either $H=0$ and $N(G)=17$, or that $N(G) \ge 29$. The degree estimate 
$N\ge 2d+1$ implies that $N(G) \ge 29$ when the degree of $G$ is at least 14. Therefore 
it remains only to  check the case when $\text{deg}(G) =13$, or equivalently when $\text{deg}(H) =6$.

We proceed as before.
Define $H$ as in equation \eqref{eq:H}. Before the statement of the Theorem we have listed all the terms in the resulting $G = F-H +HF$.
Here $U$ and $V$ are the only coefficients that can be negative since all other coefficients appear alone as coefficients of $G$.

The case $V<0$: Three of the coefficients are $V+7K, V+7L,V+7M$. Thus $K, L, M$ must be positive. The first row gives 3 terms. Rows 8, 9, 10-15 give $24$ more terms. From $(7 R+7 S+7 T+7 V )$, at least one of $R, S, T$ must be positive. Suppose $R>0$, then the coefficients $R$ and $(14B+7R)$ must be positive. Thus $N(G)\ge3+ 24 +2 =  29$.

The case $V\geq 0$: The 3 possibilities are $U<0$, $U=0$, and $U>0$.
Furthermore, at least one of $K, L, M$ must be non-zero for $G$ to be degree 13. Without loss of generality, assume $K>0$. When $U>0$, we get $N(G)\geq 33$. 

When $U=0$, there are 17 terms with K that must appear in $G$. We also have the 3 constant terms. The $(14-U)$ coefficient yields another term in $G$. The coefficients in each column of the second and third rows cannot simultaneously vanish, and we get another 3 terms in $N(G)$. Similarly, rows 5 and 6 give another 3 and rows 10 and 11 give 2 more terms. Thus, $N(G)\geq 3+17+1+3+3+2 =29$.

When $U<0$, we have the initial 3 constant terms and 15 terms with $K$ and no $U$. Including also the $14-U$ coefficient gives us 19 positive coefficients so far. We pair the corresponding coefficients in the second and third rows, fifth and sixth rows to get an additional 6 terms. Pairing the 10th and 11th rows gives 2 more. Thus we have at least 27 terms. Finally using $U<0$ with the $U+7B+7L$ coefficient gives $B>0$ or $L>0$. Thus the $(7 B+D+14 L+7 R)$ and $\phi(0,C+B+S+T,L+V)$ coefficients must be positive. Again we have $N(G)\geq 29$.
\end{proof}

This theorem leaves undecided three possible values, namely $31, 35, 36$.  The authors believe these values are not possible.  
To find an example with $36$ terms, degree estimates force $G$ to be of degree at most $17$, and hence $H$ to be of degree at most $10$.
There are 39 (non-constant) monomials that are $\Gamma(7)$-invariant and degree at most 10. The resulting $G$ has 161 coefficients. The problem then is to determine how many of the coefficients of $G$ can be made to simultaneously vanish. The authors hope to address this computational problem in a future paper.


\section{Interpretation in terms of sparseness}

In applied mathematics it is often important to know how many components of an $n$-tuple of numbers are non-zero.  This number is often called its ${\ell}^0$ norm;
see \cite{IEEE} for its use in compressed sensing. See also \cite{DGL} for the connection with CR geometry. 
The ${\ell}^0$ norm $\|{\bf w}\|_0$ of a vector ${\bf w}$ is {\bf not} a norm; it is the number of 
non-vanishing components of ${\bf w}$ with respect to a fixed basis. 

Consider an affine map ${\bf v} \to W({\bf v})$ between vector spaces with bases chosen. It is natural to study the ${\ell}^0$ norm of $W({\bf v})$ as a function of ${\bf v}$,  but
this simple sounding question is annoyingly complicated.  The results in this paper can be interpreted in this manner. The set of coefficients in the charts in Section 5 can be regarded as images of an affine map.
We describe the easier situation for $\Gamma(2r+1,2)$. The proof of Theorem 1.2 for this group includes finding 
all special polynomials of degree at  most $4r+1$.  We show that such polynomials depend on
$r$ non-negative parameters. Let us denote these $r$ parameters by a vector ${\bf v}$ with respect to the standard basis in ${\mathbb R}^r$. 
This vector ${\bf v}$ lies in an explicit closed rectangle
in ${\mathbb R}^r$. We use its components as coefficients in defining an invariant polynomial $h_{\bf v}$.
All special polynomials of degree at most $4r+1$ then have the form
\begin{equation} W({\bf v})= f_{2r+1}(x,y) + h_{\bf v}(x,y) \left(f_{2r+1}(x,y) - 1\right). \label{eq:4} \end{equation}

Note that $W$ is an affine map. In \eqref{eq:4}, the coefficients of the polynomial $h_{\bf v}$ are the $r$ parameters. 
Theorem 1.2 for $ \Gamma(2r+1,2)$ has the following interpretation:
\begin{itemize}
\item The minimum ${\ell}^0$ norm satisfies $\|W({\bf v})\|_0=r+2$.
\item $\|W({\bf v})\|_0$ cannot lie in the integer interval $[1,r+1]$ nor in $[r+3, 2r+2]$.
\end{itemize}

Furthermore, if we allow polynomials of larger degree, we can regard $W$ as an affine map $h \mapsto W(h)$ from a vector space of invariant polynomials (of dimension larger than $r$) to itself.
Then
$$ W(h) = f _{2r+1} + (f_{2r+1}-1)h.$$
The last conclusion of Theorem 1.2 for this group then  states:
\begin{itemize}
\item For any $K \ge 2r+3$, there is an $h$ with $\|W(h)\|_0 = K$.  
\end{itemize}

Similar interpretations apply in the other cases; for $\Gamma(7)$ we have listed several explicit examples of this situation where the number of source variables and the dimension of the image of the affine map are large.

\begin{remark} Given an affine map from ${\mathbb R}^r$ to some ${\mathbb R}^K$, it seems to be quite difficult in general to describe the set of values of the $\ell^0$ norm. The situation in this paper involves 
an additional difficulty which we illustrate in the following example. \end{remark} 

\begin{example} The affine map here arises when considering Example 1.1 in source dimension $2$. Define $W:{\mathbb R}^3 \to {\mathbb R}^8$ by
$$ W(A,B,C) = (1-A, 2-B, 1-C, A, 2A+B, A+2B+C, B+2C, C). $$ 
The $\ell^0$ norm of $W(A,B,C)$ is generically equal to $8$. We note the following additional values and the corresponding invariant polynomial $p$. Each such polynomial equals $1$ when $x+y=1$.
\begin{itemize}
\item $\|W(0,0,0)\|_0 = 3$. Here $p(x,y) = (x+y)^2$.
\item$\|W(1,-2,1)\|_0  = 4$. Here $p(x,y) = 4xy+ x^4 - 2 x^2 y^2 + y^4$.
\item $\|W(1,2,1)\|_0 = 5$. Here $p(x,y) = (x+y)^4$. 
\end{itemize}

The invariant polynomial in the second item has a negative coefficient, and hence it is {\it not special}. There is no other $(A,B,C)$ for which $\|W(A,B,C)\|_0 = 4$. 
There are no $(A,B,C)$ for which $\|W(A,B,C)\|_0$ equals either $1$ or $2$. Thus $1,2$ are gaps in the possible values in general,
and $4$ is also a gap if we insist that $W(A,B,C)$ lies in the  orthant $Q$ where all the target variables are non-negative.
\end{example} 

We now summarize the situation from this perspective. Assume that the basic polynomial $F$ is known. We consider a collection ${\bf S}$ of polynomials $H$. (In this paper, $H$ must be group-invariant.)
We regard the coefficients of $H$ as parameters and form $G=F-H+HF$. When the degree of $H$ is bounded, the list of coefficients of $G$ can be thought of as defining an affine map from some real Euclidean space to one of typically larger dimension. Let us write $W(H)$ for the image of this map. The condition that $F$ has non-negative coefficients forces these coefficients to satisfy a system of linear inequalities. We want to describe the set of possible values of $N(G)$, equivalently $\|W(H)\|_0,$ as $H$ ranges over ${\bf S}$. 
For the group $\Gamma(2r+1,2)$, for example, the number of terms $N(G)$  can be either $r+2$ or any
positive integer at least $2r+3$, and no other values are possible. The result for $\Gamma(7)$ is considerably harder to state and appears in Theorem 5.1.

\vspace{-0.2cm}

\section{Concluding remarks}

The coefficients of special polynomials in our charts can be regarded as defining an affine map from ${\mathbb R}^N \to {\mathbb R}^K$, where $N$ is the number of parameters in our polynomial $H$ and $K$ is the number of coefficients. Here typically $K >>N$. These coefficients must be nonnegative and the structure of the equations leads to further constraints.
The result  is, for each degree $k$, a compact convex subset ${\bf S}_k$ of ${\mathbb R}^N$ in which the coefficients must lie. To analyze the possible values of $N(G)$ for $G \in {\bf S}_k$ one must study the boundary of ${\bf S}_k$.
Doing so is essentially equivalent to understanding sparseness for linear systems. We saw one simple example where the set consisted of the interval $[0,14]$. In this case $N(G_\lambda) =17$ for $\lambda=0$, $N(G_\lambda) =30$ for $0 < \lambda <1$, and $N(G_\lambda) = 29$ when $\lambda=14$. It is possible in principle to regard all of our calculations from this point of view. Perhaps the most important observation is that invariance reduces the size of the linear algebra problem in two ways.
First, the number of invariant monomials of up to a given degree is smaller than the number of all monomials up to that degree. Second, the invariance allows for a somewhat simplified manner of analyzing the boundary.

We next mention further  directions one could take along the lines of this paper.

The paper \cite{EX} studies the Bergman kernel function for the quotient of the ball by the first two classes of groups we have mentioned. It seems interesting to consider the case of the group in Example 1.3.

Since the possibilities for group-invariant rational sphere maps are so limited, it is natural to replace invariance with equivariance.
In \cite{DX1} and \cite{DX2}, one assigns a certain group $\Gamma_f$ to a rational sphere map $f$. This group consists of all automorphisms $\alpha$ of the source ball for which there exists an automorphism
$\beta$ of the target ball with $f\circ \alpha = \beta \circ f$. There are many possibilities for $\Gamma_f$. For example, given a finite group $G$, there always is a polynomial sphere map $f$ for which
$\Gamma_f$ is isomorphic to $G$. Furthermore, it is known precisely when $\Gamma_f$ is the full automorphism group, when it is the unitary group, when it contains the $n$-torus, and so on. See \cite{GWZ} and \cite{GL} for further results in this direction.
There remain combinatorial questions about the relationship between the degree of a polynomial map and the number of terms for a map with given group $\Gamma_f$.

Similar questions apply for the groups $\Gamma(p,q)$, but one must be cognizant of the warning preceding Example 2.1. When coefficients can be of both signs, it is extremely difficult to count the number
of terms in the product of two polynomials. It seems nonetheless worthwhile to study the gaps that arise in the number of possible terms for polynomials invariant under $\Gamma(p,q)$.

\end{document}